\newtheorem{teorema}{Theorem}[section]
\newtheorem{propo}[teorema]{Proposition}
\newtheorem{lema}[teorema]{Lemma}
\newtheorem{coro}[teorema]{Corollary}
\newtheorem{eje}{Example}
\theoremstyle{definition}
\newtheorem{defin}[teorema]{Definition}
\theoremstyle{remark}
\newtheorem{ejemplo}[teorema]{Example}
\newcommand{\mult}{{\rm{mult}}}
\newcommand{\ord}{{\rm{ord}}}
\newcommand{\sop}{{\hbox{\rm supp}}}
\newcommand{\grad}{{\hbox{\rm deg}}}
\newcommand{\dd}{{\hbox{\rm d}}}
\newcommand{\PH}{{\hbox{\rm PH}}}
\newcommand{\fdp}{\hfill $\square$}
\begin{document}

\title{Characterization of second type plane foliations using Newton polygons}

\author{Percy Fern\'{a}ndez-S\'{a}nchez}
\address[Percy Fern\'{a}ndez]{Dpto. Ciencias - Secci\'{o}n Matem\'{a}ticas, Pontificia Universidad Cat\'{o}lica del Per\'{u}, Av. Universitaria 1801,
San Miguel, Lima 32, Peru}
\email{pefernan@pucp.edu.pe}

\author{Evelia R. Garc\'{i}a Barroso}
\address[Evelia R. Garc\'{i}a Barroso]{Dpto. Matem\'{a}ticas, Estad\'{\i}stica e Investigaci\'on Operativa\\
Secci\'on de Matem\'aticas\\
Universidad de La Laguna. Apartado de Correos 456. 38200 La Laguna, Tenerife, Spain.}
\email{ergarcia@ull.es}

\author{Nancy Saravia-Molina}
\address[Nancy Saravia-Molina]{Dpto. Ciencias - Secci\'{o}n Matem\'{a}ticas, Pontificia Universidad Cat\'{o}lica del Per\'{u}, Av. Universitaria 1801,
San Miguel, Lima 32, Peru}
\email{nsaraviam@pucp.pe}

\thanks{First-named and third-named authors were partially supported by the Pontificia Universidad Cat\'{o}lica del Per\'{u} project VRI-DGI 2017-01-0083. Second-named author was partially supported by the Spanish grant  MTM2016-80659-P}

\date{\today}

\begin{abstract}
In this article we characterize the foliations that have the same Newton polygon that their union of formal separatrices, they are the foliations called of the second type. In the case of cuspidal foliations studied by Loray \cite{Loray}, we precise this characterization using the Poincar\'e-Hopf index. This index  also characterizes the cuspidal foliations having the same desingularization that the union of its  separatrices. Finally we give necessary and sufficient conditions when these cuspidal foliations are  generalized curves, and a characterization when they have only one separatrix.

\end{abstract}

\maketitle
\section{Introduction}
Camacho, Lins-Neto and Sad \cite{Camacho-Lins-Sad} introduced and studied the singularities of foliations of the generalized curve type, these are the foliations without saddle-nodes in their reduction of singularities. These foliations receive this name because they have a behavior similar to the union of their separatrices, where a separatrix is an irreducible analytical curve invariant for the foliation. For these foliations the Poincar\'e-Hopf index coincides with the Milnor number of the union of their separatrices (\cite{Mol-Soares} and \cite{Camacho-Lins-Sad}) and the reduction of singularities of these foliations coincides with the desingularization of the union of their separatrices.

The singularities of generalized curved type verify that their G\'omez Mont - Seade - Verjovski index \cite{Gomez Mont-Seade-Verjovsky} is zero and their Camacho - Sad index \cite{Camacho-Sad2} and the Baum-Bott index are equal \cite{Brunella1}.\\

The foliations of the second type can be thought of as a generalization of the singularities of the generalized curve type, in which we will allow the existence of formal separatrices. In order to add the formal separatrix we must admit that we have saddle-nodes in their resolution of singularities that generate formal separatrices, they could not be a corner of two divisors, nor could saddle-nodes outside the corners with weak separatrix contained in the divisor. Note that with these restrictions the singularities of a second type foliation with a single separatrix will have to be a generalized curve foliation. The singularities of second type were introduced by Mattei and Salem \cite{Mattei-Salem}. They characterized this type of singularities by means of the coincidence of the multiplicity of the foliation with the multiplicity of the union of their formal separatrices. For these singularities, the reduction of singularities coincides with the desingularization of its separatrix. It should be noted that the proof made in \cite{Camacho-Lins-Sad} to prove this property for generalized curve type foliations also proves this property for second type singularities. There are other characterizations of these singularities (see \cite{Cano2-Corral-Mol} and \cite{Fernandez-Mol}).\\

Merle \cite{Merle} gives a decomposition of the polar curve of an irreducible curve $\mathcal{C}$ that determines the topology of $\mathcal{C}$. This theorem was generalized for foliations by Rouill\'e \cite{Rouille1}, where he gives a decomposition of the polar of a foliation, of the generalized curve type, which determines the topology of its separatrix. The main ingredient for his decomposition is Dulac's Theorem \cite{Dulac}, this theorem tells us that the Newton polygon of the foliation coincides with the Newton polygon of the separatrix. Merle's theorem has been generalized for reduced curves by \cite{GarciaBarroso} and \cite{GarciaBarroso-Gwozdziewicz1}, the first of which was generalized for foliations by \cite{Corral1} and the second by \cite{Saravia}.
There are examples of foliations where their Newton polygon coincides with that of their separatrices and is not a foliation of the generalized curve type, however these foliations are of the second type (see Example \ref{segundo tipo}). In this paper we give a new characterization of the singularities of the second type in terms of the Newton polygon of their union of separatrices.

\begin{teorema}\label{resultado1}
A non-dicritical foliation is of the second type if and only if its Newton polygon coincides with that of their union of separatrices.
\end{teorema}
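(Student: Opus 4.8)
The plan is to prove the two implications separately, using the reduction of singularities of the foliation $\mathcal{F}$ and the structure of its dual tree, and comparing it at each infinitely near point with the reduction of the union of separatrices $\mathcal{S}=\mathcal{S}(\mathcal{F})$.

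First I would set up the key invariant. Recall that for a non-dicritical foliation, a single blow-up at a singular point $p$ changes the algebraic multiplicity: if $\nu_p(\mathcal{F})$ denotes the multiplicity of $\mathcal{F}$ at $p$, then the multiplicity drops by $\nu_p(\mathcal{F})$ (or $\nu_p(\mathcal{F})+1$ when the exceptional divisor is invariant) when we pass to a point on the first exceptional divisor, and the analogous statement holds for the curve $\mathcal{S}$ with $\nu_p(\mathcal{S})$ in place of $\nu_p(\mathcal{F})$. The Newton polygon of $\mathcal{F}$ at $p$ — taken with respect to suitable coordinates adapted to the separatrices through $p$ — has its vertices and edges controlled precisely by these multiplicities along the various branches; in fact the Newton polygon of $\mathcal{F}$ coincides with that of $\mathcal{S}$ if and only if $\nu_p(\mathcal{F})=\nu_p(\mathcal{S})$ \emph{and} the same equality is inherited, after blow-up and in appropriate coordinates, at every infinitely near point. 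This reduces the global statement to a purely local one tracked along the resolution tree: \emph{the Newton polygons of $\mathcal{F}$ and $\mathcal{S}$ agree at $p$ if and only if $\nu_q(\mathcal{F})=\nu_q(\mathcal{S})$ for every point $q$ infinitely near $p$ appearing in the reduction of $\mathcal{F}$.}

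For the implication ``second type $\Rightarrow$ equality of Newton polygons'': by the Mattei–Salem characterization quoted in the introduction, $\mathcal{F}$ is of the second type exactly when $\nu_q(\mathcal{F})=\nu_q(\mathcal{S})$ holds at every infinitely near point $q$ of the reduction (the reduction of $\mathcal{F}$ and the desingularization of $\mathcal{S}$ being the same, so there is no ambiguity about which points $q$ to check). Feeding this into the local criterion of the previous paragraph immediately gives that the Newton polygon of $\mathcal{F}$ equals that of $\mathcal{S}$. For the converse: suppose $\mathcal{F}$ is \emph{not} of the second type; then there is an infinitely near point $q$ — the first one, in the order of the resolution, at which the defect appears — where $\nu_q(\mathcal{F})>\nu_q(\mathcal{S})$ (the inequality $\nu_q(\mathcal{F})\ge \nu_q(\mathcal{S})$ always holds for non-dicritical foliations since every branch of $\mathcal{S}$ is $\mathcal{F}$-invariant). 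Because $q$ is the \emph{first} such point, at $q$ the Newton polygons of the strict transforms still record exactly $\nu_q(\mathcal{F})$ and $\nu_q(\mathcal{S})$ on their respective boundary, so the strict transform of $\mathcal{F}$ at $q$ has a strictly larger Newton polygon than that of $\mathcal{S}$; pulling this back through the (common, up to $q$) sequence of blow-ups yields a strict inclusion of Newton polygons at the original point, contradicting the hypothesis.

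The main obstacle will be the bookkeeping in the local criterion: one must be careful that ``the Newton polygon'' is computed in coordinates in which the separatrices occupy the two axes (or at least so that the exceptional divisor, where relevant, is one axis), and one must verify that the equality of multiplicities at a point does force equality of the \emph{whole} Newton polygon there and not merely of its lowest vertex — this requires analysing separately the cases where the exceptional divisor is or is not invariant, and dealing with the saddle-node points allowed in a second-type reduction (corner-free, with strong separatrix not contained in the divisor), checking that such saddle-nodes do not perturb the polygon. Handling the formal (non-convergent) separatrices uniformly with the convergent ones, via formal coordinates, is a secondary technical point but is already standard in this setting.
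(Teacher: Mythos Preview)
Your plan overshoots in one direction and takes an unnecessarily indirect route in the other; the paper's argument is both shorter and structured differently.

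For the implication ``$\mathcal{N}(\widehat\omega)=\mathcal{N}(\widehat f)\Rightarrow$ second type'' you propose to locate the first infinitely near point $q$ where $\nu_q(\mathcal{F})>\nu_q(\mathcal{S})$ and then pull the discrepancy back to the origin. This is not needed: the Newton polygon already determines the multiplicity at the origin via $\mult(\widehat\omega)=\min\{i+j-1:(i,j)\in\mathcal{N}(\widehat\omega)\}$, so $\mathcal{N}(\widehat\omega)=\mathcal{N}(\widehat f)$ gives $\mult(\widehat\omega)=\mult(\dd\widehat f)$ at once, and the Mattei--Salem characterization (Theorem~\ref{caracterizacion segundo tipo}(2)) applied at the origin alone finishes the job. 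This is exactly the content of Lemma~\ref{L1}. Your pull-back step (``a strict inclusion of Newton polygons at $q$ yields a strict inclusion at the origin'') is in fact not obvious and would itself require proof; fortunately it is superfluous.

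For the implication ``second type $\Rightarrow$ $\mathcal{N}(\widehat\omega)=\mathcal{N}(\widehat f)$'' the paper does \emph{not} go through multiplicities at infinitely near points. Instead it proves (Proposition~\ref{pieza}, by induction on the number of blow-ups, handling the saddle-node base case explicitly via the formal normal form) that for every formal arc $\gamma(t)$ one has $\ord_t\gamma^{*}\widehat\omega=\ord_t\gamma^{*}\dd\widehat f$. Taking monomial arcs $\gamma(t)=(t^{a},t^{b})$ recovers the values $\min\{ai+bj:(i,j)\in\mathcal{N}\}$, i.e.\ the support function of the polygon in every rational direction, so the two polygons coincide (this is the Rouill\'e argument the paper invokes in Corollary~\ref{poligonoigual222}). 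This bypasses your ``local criterion'' entirely.

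Your local criterion --- that equality of all $\nu_q$ along the resolution forces equality of the Newton polygon at the origin --- is plausible and is morally equivalent to the paper's Proposition~\ref{pieza} (testing along monomial arcs amounts to reading multiplicities after toric/weighted blow-ups), but you leave it as ``the main obstacle'' without proof. In particular the sentence ``one must verify that the equality of multiplicities at a point does force equality of the whole Newton polygon there and not merely of its lowest vertex'' is precisely the content you would still have to supply, and it is the heart of the matter. The paper's arc-order argument is the clean way to supply it.
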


We will prove Theorem \ref{resultado1} in Section \ref{characterization}.

\medskip

According to Loray \cite{Loray}, a foliation with a cuspidal singularity is given by
\begin{equation}
\label{cusp}
\mathcal{F}_{\omega_{p,q,\Delta}}: \omega_{p,q,\Delta}=\dd(y^{p}-x^{q})+\Delta(x,y)(px\dd y-qy\dd x),
\end{equation}
where $p,q$ are positive natural numbers and $\Delta(x,y) \in \mathbb{C}\{x,y\}$.

\medskip

We will use Theorem \ref{resultado1} to characterize, in terms of the Poincar\'e-Hopf index for foliations, when the foliations with cuspidal singularities studied by Loray \cite{Loray} are of the second type.

\medskip

Denote by $\PH({\mathcal F})$ the Poincar\'e-Hopf index of the foliation ${\mathcal F}$. For the foliation $\dd(y^{p}-x^{q})$ we have  $\PH_{(p,q)}:=\PH(\dd(y^{p}-x^{q}))=(p-1)(q-1)$.

\begin{teorema}\label{resultado2}
Let $\mathcal{F}_{\omega_{p,q,\Delta}}$ be a cuspidal foliation as in (\ref{cusp}) and suppose that $\mathcal{F}_{\omega_{p,q,\Delta}}$ is non-dicritical. The next statements are equivalents:
\begin{itemize}
\item [$(a)$] The cuspidal foliation $\mathcal{F}_{\omega_{p,q,\Delta}}$  is of the second type.
\item [$(b)$] The intersection number $(\Delta,y^{p}-x^{q})_{0} \geq \PH_{(p,q)}-1$.
\item [$(c)$] The cuspidal foliation $\mathcal{F}_{\omega_{p,q,\Delta}}$ has the same reduction of singularities that $\dd (y^p-x^q)$.
\end{itemize}
\end{teorema}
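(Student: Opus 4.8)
The plan is to derive $(a)\Leftrightarrow(b)$ from Theorem~\ref{resultado1} once the relevant Newton polygons are made explicit, and $(a)\Leftrightarrow(c)$ from the behaviour of generalized curves under reduction of singularities. First I would compute the Newton polygon of the cuspidal foliation. Writing $\omega_{p,q,\Delta}=(-qx^{q-1}-qy\Delta)\,\dd x+(py^{p-1}+px\Delta)\,\dd y$ and using the normalization of the Newton polygon of a foliation for which $\mathcal{N}(\dd f)=\mathcal{N}(f)$, one checks that no monomial cancellation occurs and that the cloud of points attached to $\omega_{p,q,\Delta}$ is $\{(q,0),(0,p)\}\cup\sop(xy\Delta)$; hence $\mathcal{N}(\mathcal{F}_{\omega_{p,q,\Delta}})$ is the convex hull of $\{(q,0),(0,p)\}\cup\sop(xy\Delta)$ translated by the first quadrant. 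Since every point of $\sop(xy\Delta)$ has both coordinates $\ge 1$, this polygon meets the coordinate axes exactly at $(q,0)$ and $(0,p)$, the same points as the Newton polygon of $y^{p}-x^{q}$, whose only compact face is the segment joining them. Moreover a direct computation gives $\omega_{p,q,\Delta}\wedge\dd(y^{p}-x^{q})=-pq\,\Delta\,(y^{p}-x^{q})\,\dd x\wedge\dd y$, so $C:\ y^{p}-x^{q}=0$ is always invariant, while (for $p,q\ge 2$) neither coordinate axis is.

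Now I combine this with Theorem~\ref{resultado1}. If $\mathcal{F}_{\omega_{p,q,\Delta}}$ had a separatrix not contained in $C$, then the Newton polygon of its union of separatrices would be a Minkowski sum $\mathcal{N}(C)+\mathcal{N}(C')$ with $C'$ a genuine curve through the origin, hence would meet each axis strictly farther from the origin than $\mathcal{N}(\mathcal{F}_{\omega_{p,q,\Delta}})$; by Theorem~\ref{resultado1} such a foliation cannot be of the second type. Thus a cuspidal foliation of the second type has $C$ as its total separatrix, and Theorem~\ref{resultado1} then forces $\mathcal{N}(\mathcal{F}_{\omega_{p,q,\Delta}})=\mathcal{N}(y^{p}-x^{q})$; by the computation above this equality holds iff $\sop(xy\Delta)$ lies on or above the segment $[(q,0),(0,p)]$, i.e. iff $p(i+1)+q(j+1)\ge pq$, that is $pi+qj\ge pq-p-q=\PH_{(p,q)}-1$, for every monomial $x^{i}y^{j}$ of $\Delta$ — equivalently, iff the $(p,q)$--weighted order of $\Delta$ is at least $\PH_{(p,q)}-1$. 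Parametrizing the branches of $y^{p}-x^{q}$ shows that $(\Delta,y^{p}-x^{q})_{0}$ is always bounded below by this weighted order, which yields the implication from the weighted condition to $(b)$, and hence $(a)\Rightarrow(b)$. For the reverse implication one must show that, for a \emph{non-dicritical} cuspidal foliation, $(\Delta,y^{p}-x^{q})_{0}$ actually equals the $(p,q)$--weighted order of $\Delta$ (and, as above, that there is no separatrix off $C$). Both facts follow from a single principle: the intersection number $(\Delta,y^{p}-x^{q})_{0}$ can exceed the weighted order only if the leading $(p,q)$--weighted form of $\Delta$ vanishes along a branch of $C$, and this anomalously high contact of the cofactor $-pq\Delta$ of $C$ with that branch — like the presence of a separatrix off $C$ — forces a dicritical component to appear in the reduction. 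Establishing this principle, i.e. turning an ``excess cancellation'' in $(\Delta,y^{p}-x^{q})_{0}$ into dicriticity, is the main obstacle of the proof, and it is exactly where the non-dicriticity hypothesis is used.

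Finally, for $(a)\Leftrightarrow(c)$: the foliation $\dd(y^{p}-x^{q})$ is the Hamiltonian foliation of $C$, hence a generalized curve, so its reduction of singularities is the minimal desingularization of $C$. If $\mathcal{F}_{\omega_{p,q,\Delta}}$ is of the second type, the Camacho--Lins--Sad argument (valid for second type foliations, as recalled in the Introduction) shows that its reduction of singularities coincides with the desingularization of its total separatrix, which by the previous step is $C$; this gives $(c)$. Conversely, if $\mathcal{F}_{\omega_{p,q,\Delta}}$ has the same reduction of singularities as $\dd(y^{p}-x^{q})$, then the exceptional divisor of its reduction is exactly the one produced by desingularizing $C$, which leaves no room for a separatrix off $C$ and, read off the resolution, forces $\mathcal{N}(\mathcal{F}_{\omega_{p,q,\Delta}})=\mathcal{N}(C)$; hence $(b)$, and thus $(a)$. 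Equivalently, the proof may be organised as the cycle $(b)\Rightarrow(a)\Rightarrow(c)\Rightarrow(b)$, the first arrow being the content of the second paragraph (including the obstacle), the second being the Camacho--Lins--Sad property, and the last the reading of the Newton polygon off the reduction.
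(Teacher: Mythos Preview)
Your direction for $(a)\Rightarrow(b)$ is essentially the paper's Proposition~\ref{orden pesado-segundo tipo2}: both use Theorem~\ref{resultado1} to force $\mathcal N(\omega_{p,q,\Delta})=\mathcal N(y^p-x^q)$ and then read off the weighted-order inequality from the support computation. For the separatrix bookkeeping, however, the paper does not argue via Minkowski sums of Newton polygons; it proves directly (Lemma~\ref{eje separatriz4}) that \emph{any} non-dicritical cuspidal foliation has $y^p-x^q$ as its full separatrix, using only the elementary bound $\mult(\omega)\ge\mult(\widehat{\mathcal S}(\mathcal F_\omega))-1$ from Theorem~\ref{caracterizacion segundo tipo}. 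This is both shorter and removes the need for your ``extra separatrix $\Rightarrow$ dicritical'' principle.

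For $(b)\Rightarrow(a)$ and for $(b)\Leftrightarrow(c)$ the paper takes a different route from yours. In Proposition~\ref{orden pesado-segundo tipo1} the implication $(b)\Rightarrow(a)$ is obtained not through Theorem~\ref{resultado1} but through the multiplicity criterion of Theorem~\ref{caracterizacion segundo tipo}: from $i_0p+j_0q\ge\PH_{(p,q)}-1$ one gets $\ord\Delta\ge\min(p,q)-1$, hence $\mult(\omega_{p,q,\Delta})=\mult(\dd(y^p-x^q))$, and the foliation is of the second type. This bypasses the Newton-polygon machinery entirely. For $(b)\Leftrightarrow(c)$ the paper performs an explicit toric resolution $E:(x,y)=(u^nv^{p/d},u^mv^{q/d})$ (Proposition~\ref{segundo tipo misma resolucion}) and checks the singularities of the strict transform by hand; your ``read the Newton polygon off the reduction'' is the right intuition but is not a proof at the level of detail the paper provides.

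Finally, on what you call the ``main obstacle'': the paper does not link the possible discrepancy between $(\Delta,y^p-x^q)_0$ and the $(p,q)$-weighted order of $\Delta$ to dicriticity at all. It simply records, just before Lemma~\ref{eje separatriz4}, the identity $(\Delta,y^p-x^q)_0=d\cdot\ord_{(p,q)}(\Delta)$ and uses it throughout, so that condition~$(b)$ is treated as synonymous with the weighted-order inequality. Your proposed mechanism---that anomalously high contact of the leading weighted form of $\Delta$ with a branch of $C$ forces a dicritical component---is an interesting claim, but it is neither proved in your sketch nor invoked in the paper; if you want to keep your organisation you would have to supply that argument, whereas adopting the paper's identity (or restating $(b)$ as the weighted-order condition) makes the obstacle disappear.
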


\medskip

In general, if a foliation has the same reduction of singularities as its union of separatrices then  the  foliation is not of the second type (see Example \ref{contra1}). However, after Theorem  \ref{resultado2}, for the cuspidal foliations this property characterizes foliations of the second type.

\medskip

We also give, in the next theorem,  necessary and sufficient conditions when the cuspidal foliations are of the generalized curve type.
\begin{teorema}\label{resultado3} Let $\mathcal{F}_{\omega_{p,q,\Delta}}$ be a cuspidal foliation as in (\ref{cusp}) and suppose that $\mathcal{F}_{\omega_{p,q,\Delta}}$ is non-dicritical. We have:
\begin{enumerate}
\item [(a)] If the intersection number $(\Delta,y^{p}-x^{q})_{0}>\PH_{(p,q)}-1$, then $\mathcal{F}_{p,q,\Delta}(x,y)$ is of the generalized curve type.
\item [(b)] If  $p, q $ are coprime then the foliation $\mathcal{F}_{\omega_{p,q,\Delta}}$ is of generalized curve type if and only if $(\Delta,y^{p}-x^{q})_{0}>\PH_{(p,q)}-1$.
\end{enumerate}
\end{teorema}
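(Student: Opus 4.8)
Both parts rest on Theorems~\ref{resultado1} and~\ref{resultado2}, on the classical fact that a generalized curve foliation is of the second type, and on a Poincar\'e--Hopf index computation.

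\emph{Part $(a)$.} Assume $(\Delta,y^{p}-x^{q})_{0}>\PH_{(p,q)}-1$; since intersection numbers are integers this is the same as $(\Delta,y^{p}-x^{q})_{0}\geq \PH_{(p,q)}=(p-1)(q-1)$. By Theorem~\ref{resultado2}, $\mathcal{F}:=\mathcal{F}_{\omega_{p,q,\Delta}}$ is of the second type and shares its reduction of singularities with the foliation $\mathcal{H}$ given by $\dd(y^{p}-x^{q})$. The plan is to show $\PH(\mathcal{F})=\PH(\mathcal{H})$ and conclude that $\mathcal{F}$ has no saddle-nodes. Write $\omega_{p,q,\Delta}=a\,\dd x+b\,\dd y$, so $a=-q(x^{q-1}+y\Delta)$ and $b=p(y^{p-1}+x\Delta)$. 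From the identity $y(y^{p-1}+x\Delta)-x(x^{q-1}+y\Delta)=y^{p}-x^{q}$ and multiplicativity of intersection numbers one gets
\[
\PH(\mathcal{F})=\bigl(x^{q-1}+y\Delta,\,y^{p-1}+x\Delta\bigr)_{0}=\bigl(x^{q-1}+y\Delta,\,y^{p}-x^{q}\bigr)_{0}-(q-1).
\]
Evaluating $x^{q-1}+y\Delta$ along a Puiseux parametrization of each branch of $\{y^{p}-x^{q}=0\}$ — a cusp attached to the coprime pair $p/\!\gcd(p,q)$, $q/\!\gcd(p,q)$ — the contribution of $y\Delta$ has order strictly bigger than the order $p(q-1)$ of $x^{q-1}$, precisely because $(\Delta,y^{p}-x^{q})_{0}>pq-p-q$; hence $(x^{q-1}+y\Delta,\,y^{p}-x^{q})_{0}=pq-p$ and $\PH(\mathcal{F})=(p-1)(q-1)$. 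Since also $\PH(\mathcal{H})=(x^{q-1},y^{p-1})_{0}=(p-1)(q-1)$, and since a saddle-node of $\mathcal{F}$ would necessarily be of tangent type (because $\mathcal{F}$ is of the second type), producing a purely formal separatrix outside $\{y^{p}-x^{q}=0\}$ and hence strictly raising the index relative to the corresponding point of $\mathcal{H}$, the equality $\PH(\mathcal{F})=\PH(\mathcal{H})$ forces $\mathcal{F}$ to be a generalized curve. (Alternatively: $\{y^{p}-x^{q}=0\}$ is always a separatrix — indeed $\omega_{p,q,\Delta}\wedge\dd(y^{p}-x^{q})=-pq\,\Delta\,(y^{p}-x^{q})\,\dd x\wedge \dd y$ — so $\mu(\mathrm{Sep}(\mathcal{F}))\geq\mu(y^{p}-x^{q})=(p-1)(q-1)=\PH(\mathcal{F})$, and the inequality $\PH\geq\mu(\mathrm{Sep})$ for non-dicritical foliations, an equality exactly for generalized curves, closes the argument.)

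\emph{Part $(b)$.} The implication $\Leftarrow$ is exactly part $(a)$ and uses no coprimality. For $\Rightarrow$, if $\mathcal{F}_{\omega_{p,q,\Delta}}$ is a generalized curve then it is of the second type, so by Theorem~\ref{resultado2} we have $(\Delta,y^{p}-x^{q})_{0}\geq \PH_{(p,q)}-1=pq-p-q$. As $p,q$ are coprime, $\{y^{p}-x^{q}=0\}$ is the irreducible cusp with parametrization $x=t^{p},\,y=t^{q}$, so $(\Delta,y^{p}-x^{q})_{0}=\ord_{t}\Delta(t^{p},t^{q})$ lies in the numerical semigroup $\langle p,q\rangle$ (the remaining possibility $\Delta(0,0)\neq 0$ is excluded because it would give $(\Delta,y^{p}-x^{q})_{0}=0<pq-p-q$ for $p,q\geq 2$). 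Since $pq-p-q$ is the Frobenius number of $\langle p,q\rangle$ it is not representable, so $(\Delta,y^{p}-x^{q})_{0}\neq pq-p-q$; together with $(\Delta,y^{p}-x^{q})_{0}\geq pq-p-q$ this yields $(\Delta,y^{p}-x^{q})_{0}>pq-p-q=\PH_{(p,q)}-1$.

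\emph{Main obstacle.} The crux is the index computation in $(a)$: establishing $\PH(\mathcal{F})=(p-1)(q-1)$ under the strict inequality — controlling possible cancellations along the branches of $y^{p}-x^{q}$ when $\gcd(p,q)>1$, and using the non-dicriticality hypothesis exactly to discard the degenerate configurations — and then turning $\PH(\mathcal{F})=\PH(\mathcal{H})$ into the absence of saddle-nodes in a uniform way. The rest is formal manipulation with Theorems~\ref{resultado1}--\ref{resultado2} and with numerical semigroups.
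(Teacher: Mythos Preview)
Your route is genuinely different from the paper's. The paper proves both parts via the G\'omez-Mont--Seade--Verjovsky index: it computes $GSV(\mathcal{F},\mathcal{S}_i)$ along every branch $\mathcal{S}_i$ of $y^p-x^q$, plugs these into Brunella's branch formula $GSV(\mathcal{F},\mathcal{S})=\sum_i GSV(\mathcal{F},\mathcal{S}_i)-2\sum_{i<j}(f_i,f_j)_0$, obtains $GSV=0$, and invokes Theorem~\ref{GSV3}. You instead attack the Poincar\'e--Hopf index directly and, for $(b)\Rightarrow$, use a Frobenius-number argument. The Frobenius step is correct and elegant: since $\gcd(p,q)=1$, $(\Delta,y^p-x^q)_0\in\langle p,q\rangle\cup\{\infty\}$, while $pq-p-q$ is precisely the largest gap of $\langle p,q\rangle$, so $\ge pq-p-q$ forces $>pq-p-q$. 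This replaces the paper's explicit GSV computation in Proposition~\ref{curva generalizada 2} by a one-line semigroup argument on top of Theorem~\ref{resultado2}. Likewise, when $\gcd(p,q)=1$ your $\PH$-computation in $(a)$ is valid (single branch), so part $(b)$ is fully proved.

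The gap is in part $(a)$ when $d:=\gcd(p,q)>1$. Your identity $\PH(\mathcal{F})=(x^{q-1}+y\Delta,\,y^p-x^q)_0-(q-1)$ is correct, but the next step --- evaluating the intersection as $p(q-1)$ --- needs the \emph{branchwise} inequality $(\Delta,f_i)_0>(pq-p-q)/d$ for every branch $f_i$, whereas the hypothesis only gives the sum $\sum_i(\Delta,f_i)_0>pq-p-q$. These are not equivalent: for $p=4$, $q=6$, $\Delta=x^3+y^2+x^5$ one gets $(\Delta,y^2-x^3)_0=6<7$ and $(\Delta,y^2+x^3)_0=10$, so the global bound holds but the branchwise one fails, and indeed $(x^{5}+y\Delta,\,y^4-x^6)_0=19\neq 20$. (That particular $\Delta$ turns out to be dicritical, so it is not a counterexample to the theorem; but it shows your computation cannot succeed without a substantive use of non-dicriticality that you do not supply.) The paper's GSV approach avoids this precisely because the correction term $-2\sum_{i<j}(f_i,f_j)_0$ in Brunella's formula absorbs the branch interactions. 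Separately, your first argument in $(a)$ inverts the definition: second type means there are \emph{no} tangent saddle-nodes, so a residual saddle-node would have its weak separatrix \emph{transverse} to the divisor --- but that weak separatrix may perfectly well coincide with one of the (convergent) branches of $y^p-x^q$, so no ``extra formal separatrix outside $y^p-x^q$'' is produced and the contradiction does not fire.
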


We will prove Theorem \ref{resultado2} and Theorem \ref{resultado3} in Section \ref{cuspidal section}.

\section{Basic Definitions and Notations}
In order to fix the notation, we will remember basic concepts of local foliation theory and plane curves.
Denote by $\mathbb{C}[[x,y]]$ the ring of formal powers series in two variables with coefficients in $\mathbb{C}$ and $\mathbb{C}\{x,y\}$ the sub-ring of $\mathbb{C}[[x,y]]$ formed by formal powers series that converge in a neighborhood of $0 \in \mathbb{C}^{2}$. Consider the maximal ideals $\mathfrak{m}$ and $\widehat{\mathfrak{m}}$ of $\mathbb{C}\{x,y\}$ and $\mathbb{C}[[x,y]]$ respectively. The {\em order} of a power series $\widehat{h}(x,y)=\displaystyle\sum_{ij}a_{ij}x^iy^j\in
\mathbb{C}[[x,y]]$ is $\ord (\widehat{h}):=\min\{i+j\;:\;a_{ij}\neq 0\}$.\\

A {\em singular formal foliation} $\widehat{\mathcal{F}_{\omega}}$ of codimension one over $\mathbb{C}^2$  is locally given by a $1$-form $\widehat{\omega}=\widehat{A}(x,y)\dd x+ \widehat{B}(x,y)\dd y$, where $\widehat{A},\widehat{B}\in \widehat{\mathfrak{m}}$ are coprime. The power series $\widehat{A}$ and $\widehat{B}$ are called the {\em coefficients} of $\widehat{\omega}$. The {\em multiplicity} of the foliation $\widehat{\mathcal{F}_{\omega}}$ is defined as $\mult(\widehat{\omega}):=\min\{\ord(\widehat{A}),\ord(\widehat{B})\}$.\\

Let $T \subseteq \mathbb{N}^{2}$. Denote by $D(T)$ the convex hull of $(T + \mathbb{R}^{2}_{\geq0})$, where $+$ is the  Minkowski sum, and by $\mathcal{N}(T)$ the polygonal boundary of $D(T)$, which will call {\em Newton polygon} determined by $T$.\\

Let $\widehat{h}(x,y)=\displaystyle\sum_{i,j}a_{ij}x^iy^j\in \mathbb{C}[[x,y]]$. The {\em support} of $\widehat{h}$ is
\[
\sop(\widehat{h}):=\{(i,j)\in \mathbb{N}^{2}\;:\;a_{ij}\neq 0\},
\]

\noindent and the Newton polygon of $\widehat{h}$ is by definition the Newton polygon $\mathcal{N}(\sop(\widehat{h}))$.\\

Let $\widehat{\omega}=\widehat{A}(x,y)\dd x+\widehat{B}(x,y)\dd y$ be a one-form, where $\widehat{A},\widehat{B} \in \widehat{\mathfrak{m}}$. The {\em support} of $\widehat{\omega}$ is
\[
\sop(\widehat{\omega})=\sop(x\widehat{A})\cup \sop(y\widehat{B}).
\]

If we write $\widehat{\omega}=\displaystyle \sum_{i,j}\widehat{\omega}_{ij},$ where $\widehat{\omega}_{ij}=\widehat{A}_{ij}x^{i-1}y^{j}\dd x+ \widehat{B}_{ij}x^{i}y^{j-1}\dd y$, then
\[
\sop(\widehat{\omega})=\{(i,j):(\widehat{A}_{ij},\widehat{B}_{ij})\neq (0,0)\}.
\]

Let $\widehat{\mathcal{F}_{\omega}}:\widehat{\omega}=0$ be a foliation given by the one-form $\widehat{\omega}$. The Newton polygon of $\widehat{\mathcal{F}_{\omega}}$, denoted by $\mathcal{N}(\widehat{\mathcal{F}_{\omega}})$ or $\mathcal{N}(\widehat{\omega})$ is the Newton polygon $\mathcal{N}(\sop(\widehat{\omega}))$.\\

Let $\widehat{f}(x,y)\in  \mathbb{C}[[x,y]]$. We say that the $\widehat{\mathcal{S}}_{f}: \widehat{f}(x,y)=0$  is {\em invariant} by $\widehat{\mathcal{F}_{\omega}}$ if $\widehat{\omega} \wedge \dd \widehat{f}=\widehat{f}.\widehat{\eta},$ where $\widehat{\eta}$ is a two-form (i.e. $\widehat{\eta}=\widehat{g} \dd x \wedge \dd y$, for some $\widehat{g} \in \mathbb{C}[[x,y]]$). If $\widehat{\mathcal{S}}_{f}$ is irreducible then we will say that $\widehat{\mathcal{S}}_{f}$ is a {\em formal separatrix} of $\widehat{\mathcal{F}_{\omega}}:\widehat{\omega}=0$.\\

We will consider {\em non-dicritical} foliations, that is, foliations having a finite set of separatrices (see \cite[page 158 and page 165]{Camacho-Lins-Sad}). Let $(\widehat{\mathcal{S}}_{f_{j}})_{j=1}^{r}$ be the set of all formal separatrices of the non-dicritical foliation  $\widehat{\mathcal{F}_{\omega}}:\widehat{\omega}=0$. Each separatrix $\widehat{\mathcal{S}}_{f_{j}}$ corres\-ponds to an irreducible formal power series $\widehat{f_{j}}(x,y)$. Denote by $\widehat{\mathcal{S}}(\widehat{\mathcal{F}_{\omega}})$ the union $\bigcup\widehat{\mathcal{S}}_{f_{j}}$ of all separatrices of the foliation $\widehat{\mathcal{F}_{\omega}}$, which we will call {\em union of formal separatrices} of $\widehat{\mathcal{F}_{\omega}}$. In the following we will denote by $\mathcal{F}_{\omega}$ a holomorphic foliation and by $\mathcal{S}(\mathcal{F}_{\omega})$ its union of convergent separatrices.\\

The {\em dual vector field} associated to $\widehat{\mathcal{F}_{\omega}}$ is $X=\widehat{B}(x,y)\frac{\partial}{\partial x}-\widehat{A}(x,y)\frac{\partial}{\partial y}$. We say that the origin $(x,y)=(0,0)$ is a {\em simple or reduced singularity} of $\widehat{\mathcal{F}_{\omega}}$ if the matrix associated with the linear part of the field
\begin{equation}\label{aaa}
\left( \begin{array}{ccc}
\frac{\partial \widehat{B}(0,0)}{\partial x}  &  \frac{\partial \widehat{B}(0,0)}{\partial y}\\
& & \\
 -\frac{\partial\widehat{A}(0,0)}{\partial x}  & -\frac{\partial \widehat{A}(0,0)}{\partial y}
\end{array} \right)
\end{equation}
has two eigenvalues $\lambda,\mu$, with $\frac{\lambda}{\mu} \not\in \mathbb{Q}^{+}$.

It could happen that

\begin{enumerate}
\item [$a)$] $\lambda \mu \neq 0$ and $\frac{\lambda}{\mu} \not\in \mathbb{Q}^{+}$ in which case we will say that the {\em singularity is not degenerate} or
\item [$b)$] $\lambda \mu = 0$ and $(\lambda,\mu)\neq (0,0)$ in which case we will say that the singularity is a {\em saddle-node}.
\end{enumerate}

In the $b)$ case, the {\em strong separatrix} of a foliation
with singularity $P$ is an analytic invariant curve whose tangent at the singular point $P$
is the eigenspace associated with the non-zero eigenvalue of the matrix given in (\ref{aaa}).
The zero eigenvalue is associated with a formal separatrix called {\em weak separatrix}.\\

From now on $\pi: M \rightarrow (\mathbb{C}^{2},0)$ represents {\em the process of singularity reduction or desingularization} of $\widehat{\mathcal{F}_{\omega}}$ \cite{Mattei-Moussu}, obtained by a finite sequence of point blows-up, where  $\mathcal{D}:=\pi^{-1}(0)=\displaystyle\bigcup_{j=1}^{n}D_{j}$ is the {\em exceptional divisor}, which is a finite union of projective lines with normal crossing (that is, they are locally described by one or two regular and transversal curves). In this process, any separatrix of $\widehat{\mathcal{F}_{\omega}}$ is smooth, disjoint and transverse to $D_{j} \subset \mathcal{D}$, and it does not pass through a corner (intersection of two components of the divisor $\mathcal{D}$). Let $\widehat{\mathcal{F}}_{\omega}$ be a non-dicritical formal foliation and consider the minimal reduction of singularities $\pi:M \rightarrow (\mathbb{C}^{2},0)$ of $\widehat{\mathcal{F}}_{\omega}$ (this is, a reduction with the minimal number of blows-up that reduces the foliation). The {\em strict transform} of the foliation $\widehat{\mathcal{F}}_{\omega}$ is given by $\widehat{\mathcal{F}}'_{\omega}=\pi^{*}\widehat{\mathcal{F}}_{\omega}$  and the {\em exceptional divisor} is $\mathcal{D}=\pi^{-1}(0)$.\\

A foliation $\widehat{\mathcal{F}_{\omega}}$ is a {\em generalized curve} if in its reduction of singularities there are no saddle-node points.\\

If in the desingularization of $\widehat{\mathcal{F}}_{\omega}$, the exceptional divisor $\mathcal{D}$ at point $P$ contains the weak invariant curve of the saddle-node, then the singularity is called {\em saddle-node tangent}. Otherwise we will say that $\widehat{\mathcal{F}}_{\omega}$ is a {\em saddle-node transverse} to $\mathcal{D}$ at point $P$.\\

\begin{defin}
The foliation $\widehat{\mathcal{F}}_{\omega}$ is of the {\em second type} with respect to the divisor $\mathcal{D}$ if no singular points of $\widehat{\mathcal{F}}'_{\omega}$ is of  tangent node type.
\end{defin}

Non-dicritical foliations of the second type were studied by Mattei and Salem \cite{Mattei-Salem}, also by Cano, Corral and Mol \cite{Cano2-Corral-Mol} and in the dicritical case by Genzmer and Mol \cite{Genzmer-Mol} and Fern\'andez P\'erez-Mol \cite{Fernandez-Mol}. Mattei and Salem gave the next characterization of foliations of the second type in terms of the multiplicity of their union of formal separatrices:\\

\begin{teorema}\cite[Th\'eor\`eme 3.1.9]{Mattei-Salem}\label{caracterizacion segundo tipo}
 Let $\widehat{\mathcal{F}_{\omega}}$ be a non-dicritical foliation and let $\widehat{\mathcal{S}}(\widehat{\mathcal{F}_{\omega}}):\widehat{f}(x,y)=0$ be a reduced equation of its union of separatrices. Consider the minimal reduction of singularities $\pi:(M,D)\rightarrow (\mathbb{C}^{2},0)$ of $\widehat{\mathcal{F}_{\omega}}$. Then
\begin{enumerate}
\item $\pi$ is a reduction of singularities of $\widehat{\mathcal{S}}(\widehat{\mathcal{F}_{\omega}})$. Furthermore, if $\widehat{\mathcal{F}_{\omega}}$ is of the second type then $\pi$ is the minimal reduction of singularities of $\widehat{\mathcal{S}}(\widehat{\mathcal{F}_{\omega}})$.
\item $\mult(\widehat{\omega})\geq \mult(\widehat{\mathcal{S}}(\widehat{\mathcal{F}_{\omega}}))$ and the equality holds if and only if $\widehat{\mathcal{F}_{\omega}}$ is of the second type.
\end{enumerate}
\end{teorema}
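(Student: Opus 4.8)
\medskip

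The plan is to treat the two assertions separately, in each case running an induction along the blow-ups of the minimal reduction of singularities $\pi$ of $\widehat{\mathcal{F}_{\omega}}$. For the first part of $(1)$, the curve $\widehat{\mathcal{S}}(\widehat{\mathcal{F}_{\omega}})$ is resolved by $\pi$ for essentially formal reasons. By the properties of the desingularization of a non-dicritical foliation recalled above, after $\pi$ every separatrix $\widehat{\mathcal{S}}_{f_{j}}$ is smooth, meets the exceptional divisor $\mathcal{D}$ transversally at a single non-corner point, and distinct separatrices are disjoint; hence the strict transform of $\widehat{f}=\prod_{j}\widehat{f_{j}}$ is a disjoint union of smooth branches, each crossing exactly one component of $\mathcal{D}$ transversally at a smooth point of $\mathcal{D}$. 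Therefore the total transform $(\text{strict transform of }\widehat{f})\cup\mathcal{D}$ has normal crossings, so $\pi$ is a reduction of singularities of $\widehat{\mathcal{S}}(\widehat{\mathcal{F}_{\omega}})$.

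The core of $(2)$ is a comparison lemma for a single blow-up. Fix a point $P$ on an invariant component of the current exceptional divisor (or the origin, with empty divisor), let $\widehat{\mathcal{G}}$ be the germ of foliation there, $\widehat{g}$ the germ of its union of separatrices, and let $\sigma$ be the blow-up at $P$ with exceptional component $E$, which is invariant because we are in the non-dicritical case. I would show that $\mult_{P}(\widehat{g})$ and the multiplicity of $\widehat{\mathcal{G}}$ at $P$ obey parallel recursions in terms of the infinitely near points $Q\in E$: the classical one for the curve, through its intersection numbers with $E$, and a foliated analogue for $\widehat{\mathcal{G}}$ obtained from the behaviour of the Newton polygon (equivalently, of the Camacho--Sad and tangency indices) under $\sigma$; and that the only discrepancy between the two recursions is a nonnegative quantity which is strictly positive exactly when $\sigma$ produces a saddle-node tangent to $E$ at some $Q$. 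Granting this, a descending induction over the tree of $\pi$ finishes: at the leaves the foliation is reduced and the total transform of $\widehat{f}$ together with the divisor has normal crossings, where the base comparison of multiplicities is immediate; propagating upward gives $\mult(\widehat{\omega})\geq\mult(\widehat{\mathcal{S}}(\widehat{\mathcal{F}_{\omega}}))$, with equality precisely when no discrepancy term ever appears, that is, precisely when $\pi$ has no tangent saddle-node, that is, precisely when $\widehat{\mathcal{F}_{\omega}}$ is of the second type.

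For the minimality statement in $(1)$, suppose $\widehat{\mathcal{F}_{\omega}}$ is of the second type; since $\pi$ is minimal, the part of $\pi$ lying over any center $P$ is the reduction of singularities of the germ of foliation at $P$, and it is again tangent-node free, so the second-type property passes to each such germ. Now each center $P$ of a blow-up of $\pi$ is a non-reduced singularity of the current strict transform of the foliation. If, at such a $P$, the current total transform of $\widehat{f}$ together with the divisor were already in normal crossings, then every separatrix through $P$ would be smooth and in normal crossings with the divisor, or $\widehat{f}$ would miss $P$ and only divisor components would be separatrices there; a short local analysis --- using that a reduced singularity is detected by its linear part and that the only obstruction permitted by the second-type hypothesis is a non-reduced, tangent-node free singularity --- then forces the singularity at $P$ to be a saddle-node tangent to the divisor, contradicting that hypothesis. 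Hence no blow-up of $\pi$ is superfluous for $\widehat{f}$, and since $\pi$ already resolves $\widehat{f}$, it is its minimal reduction of singularities.

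The step I expect to be the genuine obstacle is the single-blow-up comparison lemma used for $(2)$: making the foliated recursion for the multiplicity under a non-dicritical blow-up precise --- dealing with the several points of $E$, separating the branches of $\widehat{f}$ that lie in $E$ from those transverse to it, and keeping track of the relevant indices --- and proving that the gap with the classical curve recursion is concentrated, with the correct sign, exactly at saddle-nodes tangent to the divisor. Essentially all of the work of Mattei and Salem is in that lemma; the two inductions and the normal-crossings bookkeeping around it are routine once it is in place.
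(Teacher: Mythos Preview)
This theorem is quoted from \cite{Mattei-Salem} and is not proved in the present paper; it is used as a black box (see Lemma~\ref{L1}, Proposition~\ref{pieza}, and Proposition~\ref{segundo tipo misma resolucion}). So there is no proof here to compare your proposal against.

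That said, a few remarks on your sketch. Your argument for the first half of $(1)$ is correct and is exactly the observation the paper records just before stating the theorem. Your strategy for $(2)$ --- an induction along the blow-ups of $\pi$, with a one-blow-up comparison lemma identifying the defect $\mult(\widehat{\omega})-\mult(\widehat{f})+1$ with a count of tangent saddle-nodes --- is the right shape and is indeed how Mattei--Salem proceed; you are also right that the content is entirely in that lemma. Your argument for the minimality part of $(1)$, however, has a genuine gap. At a center $P$ where the total transform of $\widehat{f}$ is already normal crossings but the foliation is not reduced, the local foliation need not be a saddle-node at $P$: it can perfectly well be a non-dicritical resonant singularity with eigenvalue ratio in $\mathbb{Q}^{+}$ and exactly the two smooth transversal separatrices $x=0$, $y=0$. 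What one must show is that somewhere in the \emph{further} reduction above $P$ a tangent saddle-node necessarily appears; equivalently, that a second-type germ whose separatrix set is already normal crossings is itself reduced. The clean way to do this is to invoke part $(2)$ at $P$ (second type is stable under strict transform, so $(2)$ applies at every infinitely near point) and then argue that a non-reduced, non-dicritical singularity with normal-crossings separatrices must have multiplicity strictly larger than that of its separatrices, which contradicts $(2)$. Your ``short local analysis'' needs to be replaced by this use of $(2)$, or by an equivalent direct computation.
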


The reciprocal of the first statement of Theorem \ref{caracterizacion segundo tipo} is not true, that is, if the reduction of singularities of the foliation and that of its union of separatrices coincide does not guarantee that the foliation is of the second type, as shown in the following example.

\begin{ejemplo}\label{contra1}
The union of the separatrices of the foliation $\mathcal{F}_{\omega}=(xy+y^{2})\dd x-x^{2}\dd y$ is $\mathcal{S}(\mathcal{F}_{\omega})=xy$. The foliation $\mathcal{F}_{\omega}$ and its union of separatrices are desingularized after a blow-up but the foliation is not of the second type because the strong separatrix that passes through the saddle-node is not contained in the divisor.
\end{ejemplo}

\section{Foliations and Newton polygones} \label{general1}

Non-dicritical generalized curve foliations are those in which no saddle-node points appear in their desingularization \cite{Seidenberg} and they have a finite number of separatrices \cite{Camacho-Lins-Sad}. These foliations were studied by Camacho, Lins Neto and Sad who proved that

\begin{teorema} \label{misma resolucion111}\cite[Theorem 2]{Camacho-Lins-Sad} Let $\mathcal{F}_{\omega}$ be a non-dicritical generalized curve foliation and $\mathcal{S}(\mathcal{F}_{\omega})$ its union of separatrices. Then $\mathcal{F}_{\omega}$ and $\mathcal{S}(\mathcal{F}_{\omega})$ have the same reduction of singularities.
\end{teorema}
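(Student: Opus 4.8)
The plan is to derive Theorem~\ref{misma resolucion111} from the characterization of second type foliations in Theorem~\ref{caracterizacion segundo tipo}. First I would observe that a non-dicritical generalized curve foliation $\mathcal{F}_{\omega}$ is of the second type: by definition its reduction of singularities contains no saddle-nodes at all, hence in particular no tangent saddle-nodes, so no singular point of the strict transform $\mathcal{F}'_{\omega}$ is of tangent node type. Next I would check that in this situation the union of separatrices is convergent, so that $\mathcal{S}(\mathcal{F}_{\omega})$ coincides with $\widehat{\mathcal{S}}(\widehat{\mathcal{F}_{\omega}})$: the only mechanism producing a genuinely formal (non-convergent) separatrix is the weak separatrix of a saddle-node appearing in the resolution, and here there are none; equivalently, one argues by descending induction on the blow-up tree using that a reduced non-degenerate singularity has exactly two smooth, transverse, analytic separatrices. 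With these two remarks, Theorem~\ref{caracterizacion segundo tipo}(1) applies: the minimal reduction of singularities $\pi$ of $\mathcal{F}_{\omega}$ is a reduction of singularities of $\mathcal{S}(\mathcal{F}_{\omega})$, and since $\mathcal{F}_{\omega}$ is of the second type, $\pi$ is moreover the \emph{minimal} one; hence the minimal desingularizations of $\mathcal{F}_{\omega}$ and of $\mathcal{S}(\mathcal{F}_{\omega})$ coincide, which is the assertion.

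For a self-contained argument independent of Mattei--Salem (this is the route of \cite{Camacho-Lins-Sad}), I would instead induct on the number $n$ of blow-ups in the minimal reduction of $\mathcal{F}_{\omega}$. If $n=0$ the origin is a reduced non-degenerate singularity, its two separatrices are smooth and transverse, $\mathcal{S}(\mathcal{F}_{\omega})$ is already a normal crossings divisor, and the identity is the minimal resolution of both. If $n\geq 1$, blow up the origin, $\pi_{1}\colon M_{1}\to(\mathbb{C}^{2},0)$; non-dicriticality makes the exceptional line $E_{1}$ invariant, the strict transform $\mathcal{F}_{1}$ is again a generalized curve at each of its finitely many singular points on $E_{1}$ with strictly shorter reduction, and the separatrices of $\mathcal{F}_{1}$ are $E_{1}$ together with the strict transforms of the separatrices of $\mathcal{F}_{\omega}$ (any separatrix of $\mathcal{F}_{1}$ transverse to $E_{1}$ projects by $\pi_{1}$ to a separatrix of $\mathcal{F}_{\omega}$). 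The key point is then $\mult(\mathcal{F}_{\omega})=\mult(\mathcal{S}(\mathcal{F}_{\omega}))$ and, as a consequence, that the singular points of $\mathcal{F}_{1}$ on $E_{1}$ are exactly the points where the strict transform of $\mathcal{S}(\mathcal{F}_{\omega})$ meets $E_{1}$; granting this, one applies the induction hypothesis at each such point and composes with $\pi_{1}$, noting that desingularizing $E_{1}$ together with the strict transform of $\mathcal{S}(\mathcal{F}_{\omega})$ amounts to desingularizing that strict transform and placing it in normal crossings with $E_{1}$.

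The main obstacle is precisely this multiplicity comparison $\mult(\mathcal{F}_{\omega})=\mult(\mathcal{S}(\mathcal{F}_{\omega}))$ together with the matching of the infinitely near point clusters of the foliation and of its union of separatrices. In the self-contained approach this is proved by tracking, under blow-ups, the Poincar\'e--Hopf index, the Milnor number of the union of separatrices, and the Camacho--Sad indices \cite{Camacho-Sad2}, and by exploiting the absence of saddle-nodes to upgrade the \textit{a priori} inequality $\mult(\mathcal{F}_{\omega})\geq\mult(\mathcal{S}(\mathcal{F}_{\omega}))$ to an equality; in the short route sketched first, this equality and the cluster compatibility are exactly the content of Theorem~\ref{caracterizacion segundo tipo}(2) and~(1).
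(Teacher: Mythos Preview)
The paper does not give its own proof of Theorem~\ref{misma resolucion111}; the result is simply quoted from \cite[Theorem~2]{Camacho-Lins-Sad} as background, so there is no in-paper argument to compare against.

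That said, both routes you outline are sound. Your first route---observing that a generalized curve foliation is in particular of the second type and then invoking Theorem~\ref{caracterizacion segundo tipo}(1)---is correct and is the shortest derivation available inside the paper's logical framework. Be aware, though, that this is historically inverted: Mattei--Salem's Theorem~\ref{caracterizacion segundo tipo} is a generalization of \cite{Camacho-Lins-Sad}, and the paper itself notes in the introduction that ``the proof made in \cite{Camacho-Lins-Sad} to prove this property for generalized curve type foliations also proves this property for second type singularities.'' So your first route is formally valid but really amounts to citing a stronger theorem whose proof already contains the one you want.

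Your second, self-contained sketch is an accurate outline of the original argument in \cite{Camacho-Lins-Sad}: induction on the length of the resolution, invariance of the exceptional line by non-dicriticality, and the identification of the singular locus of the strict transform with the trace of the strict transform of $\mathcal{S}(\mathcal{F}_{\omega})$ on $E_{1}$. You have also correctly isolated the crux, namely the multiplicity equality and the matching of infinitely near base points; in \cite{Camacho-Lins-Sad} this is handled via the behaviour of the Milnor number and related indices under blow-up, exactly as you indicate. Nothing essential is missing from your plan.
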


Rouill\'e obtained the following result on non-dicritical generalized curve foliations. In \cite{Rouille1} it is indicated that Mattei reported that this result was known by Dulac \cite{Dulac}.

\begin{propo}\label{poligono Newton iguales} \cite[Proposition 3.8]{Rouille1} Let $\mathcal{F}_{\omega}$ be a non-dicritical generalized curve foliation and $\mathcal{S}(\mathcal{F}_{\omega}):f(x,y)=0$ be a reduced equation of its union of separatrices. Then $\mathcal{N}(\omega)=\mathcal{N}(df)$.
\end{propo}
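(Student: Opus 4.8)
\noindent\emph{Proof strategy.} The plan is to prove the equality by induction on the number $N$ of point blow-ups in the minimal reduction of singularities of $\mathcal{F}_\omega$. First one reduces the statement to the identity $\mathcal{N}(\omega)=\mathcal{N}(f)$: since $\mathcal{F}_\omega$ is non-dicritical all its separatrices pass through the origin, so $f(0,0)=0$ and $(0,0)\notin\sop(f)$; writing $f=\sum a_{ij}x^iy^j$ we get $xf_x=\sum i\,a_{ij}x^iy^j$ and $yf_y=\sum j\,a_{ij}x^iy^j$, whence $\sop(df)=\sop(xf_x)\cup\sop(yf_y)=\sop(f)\setminus\{(0,0)\}=\sop(f)$ and therefore $\mathcal{N}(df)=\mathcal{N}(f)=\mathcal{N}(\mathcal{S}(\mathcal{F}_\omega))$.

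For the base case $N=0$ the origin is already a simple singularity of $\mathcal{F}_\omega$; being a generalized curve it is not a saddle-node, hence it is non-degenerate and has exactly two separatrices, both smooth and mutually transverse. In analytic coordinates where these are $\{x=0\}$ and $\{y=0\}$ we get $f=xy$ up to a unit, so $\mathcal{N}(f)=D(\{(1,1)\})$. On the other hand the invariance of $\{x=0\}$ and of $\{y=0\}$ gives $x\mid B$ and $y\mid A$; writing $A=y\widetilde A$, $B=x\widetilde B$, the matrix (\ref{aaa}) reduces to $\mathrm{diag}\big(\widetilde B(0,0),-\widetilde A(0,0)\big)$, whose two eigenvalues are non-zero by non-degeneracy. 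Hence $\sop(\omega)=\sop(xy\widetilde A)\cup\sop(xy\widetilde B)\subseteq\{(i,j):i\ge1,\ j\ge1\}$ with $(1,1)\in\sop(\omega)$, so $\mathcal{N}(\omega)=D(\{(1,1)\})=\mathcal{N}(f)$.

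For the inductive step, blow up the origin, $\pi_1\colon M_1\to(\mathbb{C}^2,0)$, with exceptional divisor $E_1$. Since $\mathcal{F}_\omega$ is non-dicritical, $E_1$ is invariant for the strict transform $\mathcal{F}_1=\pi_1^{*}\mathcal{F}_\omega$, which is again a non-dicritical generalized curve; its singular set is a finite set of points $p_1,\dots,p_s\in E_1$, and the germ of $\mathcal{F}_1$ at each $p_k$ admits a reduction of singularities using at most $N-1$ blow-ups. Moreover the union of separatrices of $\mathcal{F}_1$ is $E_1$ together with the strict transform $\widetilde{\mathcal{S}}$ of $\mathcal{S}(\mathcal{F}_\omega)$: a non-dicritical foliation has only finitely many separatrices, $\pi_1$ induces a bijection between the separatrices below and their strict transforms above, and $E_1$ is itself invariant. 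Thus, by the induction hypothesis, at each $p\in E_1$ and in local coordinates $(u,v)$ with $E_1=\{u=0\}$ we have $\mathcal{N}_p(\mathcal{F}_1)=\mathcal{N}_p\big(d(u\,\widetilde f_p)\big)$, where $\widetilde f_p=0$ is a reduced local equation of $\widetilde{\mathcal{S}}$ at $p$ (if $p$ is a regular point of $\mathcal{F}_1$ this still holds, both sides being $D(\{(1,0)\})$).

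It remains to descend from $E_1$ to the origin, and this is where I expect the main difficulty. In the chart $(x,y)=(x_1,x_1y_1)$ one has $\pi_1^{*}\omega=x_1^{\nu}\,\omega_1$ with $\nu=\mult(\omega)$ — the exponent being exactly $\nu$, and not $\nu+1$, precisely because $E_1$ is invariant — and a direct computation with supports gives $\sop(\omega_1)=\{(i+j-\nu,\,j):(i,j)\in\sop(\omega)\}$; similarly $\pi_1^{*}f=x_1^{m}\,\widetilde f$ with $m=\ord\mathcal{S}(\mathcal{F}_\omega)$ and $\sop(\widetilde f)=\{(i+j-m,\,j):(i,j)\in\sop(f)\}$, and symmetrically in the other chart. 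So the monomial transformation attached to the blow-up is literally the same for $\omega$ and for $f$, and the only link needed between the two levels is $\nu+1=m$ — in fact the stronger fact that the tangent cone $xA_\nu+yB_\nu$ of $\mathcal{F}_\omega$ coincides, up to a non-zero constant, with the tangent cone of $\mathcal{S}(\mathcal{F}_\omega)$; this is the innermost instance of the matching we are proving, and the point where the generalized curve hypothesis is first used. The obstacle is that the affine lattice map $(i,j)\mapsto(i+j-\nu,j)$ does not commute with passage to the Newton polygon, so the bare equalities $\mathcal{N}_p(\mathcal{F}_1)=\mathcal{N}_p\big(d(u\widetilde f_p)\big)$ do not, by themselves, descend to $\mathcal{N}(\omega)=\mathcal{N}(f)$; what closes the gap is the whole package of constraints coming from $\mathcal{F}_\omega$ being a generalized curve with union of separatrices $f$. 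Concretely I would strengthen the inductive hypothesis so that it also records the quasi-homogeneous leading forms of $\omega$ along the compact edges of $\mathcal{N}(\omega)$ — which for a generalized curve agree, up to units, with those of $df$ — or, equivalently, run the whole argument edge by edge with weighted blow-ups, the ordinary blow-up used above being the case of the edge of largest inclination. Throughout, the role of the generalized curve hypothesis is to exclude saddle-nodes, and in particular tangent saddle-nodes, which are exactly what would break the equality $\mathcal{N}(\omega)=\mathcal{N}(df)$.
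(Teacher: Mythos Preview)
The paper does not give its own proof of this proposition; it is quoted from Rouill\'e. What the paper does prove is the generalization to second-type foliations (Corollary~\ref{poligonoigual222}), and there the route is \emph{not} your direct Newton-polygon induction but rather the pullback-order lemma: first establish that $\ord_t\gamma^{*}\omega=\ord_t\gamma^{*}df$ for \emph{every} formal arc $\gamma(t)$ (this is Lemma~\ref{orden igual} in the generalized-curve case and Proposition~\ref{pieza} in the second-type case, both proved by induction on blow-ups), and then read off $\mathcal N(\omega)=\mathcal N(df)$ from these scalar equalities, since the Newton polygon is determined by the values $\ord_t\gamma^{*}\omega$ for monomial test arcs $\gamma(t)=(t^{a},ct^{b})$, each direction $(a,b)$ giving the supporting line of the polygon with slope $-a/b$.

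Your approach is genuinely different: you induct on the Newton polygons themselves. The difficulty you flag in the descent step is real and is not a technicality --- the lattice shear $(i,j)\mapsto(i+j-\nu,j)$ does not commute with taking convex hulls, so knowing $\mathcal N_p(\mathcal F_1)=\mathcal N_p(d(u\widetilde f_p))$ at each $p\in E_1$ does not by itself yield $\mathcal N(\omega)=\mathcal N(f)$. Your proposed remedy (strengthen the hypothesis to carry the weighted initial forms edge by edge, or equivalently use weighted blow-ups) is in the right spirit but is left as a programme rather than a proof; making it precise would essentially force you to rebuild, edge by edge, exactly the information that the order equality $\ord_t\gamma^{*}\omega=\ord_t\gamma^{*}df$ encodes all at once.

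The reason Rouill\'e's route is cleaner is that a scalar order behaves additively under blow-up: with $E(x_1,y_1)=(x_1,x_1y_1)$ one has $\ord_t\gamma^{*}\omega=\mult(\omega)\cdot\ord_t x(t)+\ord_t\widetilde\gamma^{\,*}\widetilde\omega$ and likewise for $df$, so once $\mult(\omega)=\mult(df)$ is known (this is the Camacho--Lins Neto--Sad equality for generalized curves, and the Mattei--Salem equality for second type), the inductive step is a one-line computation --- exactly equation~(\ref{orden w1 y w2}) in the paper. In short: your base case and your identification of the needed multiplicity equality are correct, but the inductive object should be $\ord_t\gamma^{*}\omega$ rather than $\mathcal N(\omega)$; that change dissolves the descent problem entirely.
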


The reciprocal of  Theorem \ref{misma resolucion111} and Proposition \ref{poligono Newton iguales} are not true, as  the following example shows:

\begin{ejemplo}\label{segundo tipo}
Consider $b \not \in \mathbb{Q}$ and the foliation defined by $\omega=((b-1)xy-y^3)\dd x+(xy-bx^2+xy^2)\dd y$. A reduced equation of its union of separatrices is $f(x,y)=xy(x-y)=0$. The foliation $\mathcal{F}_{\omega}$ and the curve $f(x,y)=0$ are desingularised after a blow-up. The Newton polygons  $\mathcal{N}(\omega)$ and $\mathcal{N}(f)$ are equal but $\mathcal{F}_{\omega}$ is not a curve generalized type foliation since a saddle-node point appears in its reduction of singularities.
\end{ejemplo}

In \cite[pag 532]{Brunella1} was introduced the {\em G\'omez-Mont-Seade-Verjovsky index} denoted by $GSV(\mathcal{F}_{\omega},\mathcal{S}(\mathcal{F}_{\omega}))$, where $\mathcal{F}_{\omega}:\omega=0$ and
$\mathcal{S}(\mathcal{F}_{\omega}):f(x,y)=0$ is a reduced equation of union of convergent separatrices of $\mathcal{F}_{\omega}$. For $f \in \mathbb{C}\{x,y\}$, there are $g, h \in \mathbb{C}\{x,y\}$, with $h$ and $f$ coprime, and an analytic one-form $\eta$ such that $g\omega=h \dd f + f\eta$. In \cite{Brunella1}, Brunella defines
$$GSV(\mathcal{F}_{\omega},\mathcal{S}(\mathcal{F}_{\omega}))=\frac{1}{2\pi i}\int_{\partial \mathcal{S}(\mathcal{F}_{\omega})}\frac{g}{h}d\left(\frac{h}{g}\right),$$
when $\mathcal{S}(\mathcal{F}_{\omega}):f=0$ is irreducible and $\omega=A(x,y)dx+B(x,y)dy$. We get
\[
GSV(\mathcal{F}_{\omega},\mathcal{S}(\mathcal{F}_{\omega}))=\ord_{t}\left(\frac{B}{f_{y}}(\gamma(t))\right),
\]
where  $\gamma(t)$ is a parametrization of $\mathcal{S}(\mathcal{F}_{\omega})$. Now, we remember some results on the index $GSV(\mathcal{F}_{\omega},\mathcal{S}(\mathcal{F}_{\omega}))$.

\begin{teorema}\label{GSV3}\cite[Th\'eor\`{e}me 3.3]{Cavalier-Lehmann}\cite[Proposition 7]{Brunella1} Let $\mathcal{F}_{\omega}$ be a non-dicritical foliation and let $\mathcal{S}(\mathcal{F}_{\omega}):f(x,y)=0$ be a reduced equation of its union of separatrices. Then $\mathcal{F}_{\omega}$ is a generalized curve foliation if and only if $GSV(\mathcal{F}_{\omega},\mathcal{S}(\mathcal{F}_{\omega}))=0$.
\end{teorema}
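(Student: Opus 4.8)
The plan is to prove both implications at once, by an induction over the desingularization $\pi\colon M\to(\mathbb{C}^{2},0)$ of $\mathcal{F}_{\omega}$ that reduces the statement to the computation of the $GSV$ index at a reduced singularity. Two preliminary remarks are in order. First, since $\mathcal{S}(\mathcal{F}_{\omega})$ need not be irreducible, one uses the general definition of $GSV$ through an identity $g\omega=h\,\dd f+f\eta$ with $h,f$ coprime, together with the fact that $GSV$ is \emph{not} additive over branches: if $f=f_{1}f_{2}$ with $\mathcal{S}_{i}\colon f_{i}=0$, then $GSV(\mathcal{F}_{\omega},\mathcal{S}_{1}\cup\mathcal{S}_{2})=GSV(\mathcal{F}_{\omega},\mathcal{S}_{1})+GSV(\mathcal{F}_{\omega},\mathcal{S}_{2})-2(\mathcal{S}_{1}\cdot\mathcal{S}_{2})_{0}$. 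Second, for the foliation $\dd f=0$ one has trivially $GSV(\dd f,\{f=0\})=0$ (take $g=h=1$, $\eta=0$).

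\textbf{Local index at a reduced singularity.} Let $p$ be a reduced singularity of a foliation $\mathcal{G}$ and $C_{p}$ the union of all convergent separatrices of $\mathcal{G}$ through $p$. From the local normal forms: if $p$ is non-degenerate, $C_{p}$ is a union of two smooth transverse separatrices and a direct computation (or the non-additivity formula applied to the value $1$ found on each separatrix) gives $GSV_{p}(\mathcal{G},C_{p})=0$; if $p$ is a saddle-node, in the normal form $x^{k+1}\dd y-y(1+\lambda x^{k})\dd x$ one gets $GSV_{p}(\mathcal{G},\text{strong sep.})=1$, $GSV_{p}(\mathcal{G},\text{weak sep.})=k+1$ and $GSV_{p}(\mathcal{G},\text{both})=k\ge 1$. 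Since $\mathcal{F}_{\omega}$ is non-dicritical, every divisor component through $p$ is a separatrix of the strict transform foliation, so along the desingularization $C_{p}$ always contains at least the strong separatrix of a saddle-node (its weak separatrix being absent from $C_{p}$ when divergent); hence $GSV_{p}(\mathcal{G},C_{p})\ge 1$ at a saddle-node, and $=0$ at a non-degenerate point.

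\textbf{Blow-up formula and induction.} Let $\sigma$ be the blow-up of the origin, $E=\sigma^{-1}(0)$ (invariant, since $\mathcal{F}_{\omega}$ is non-dicritical), $\sigma^{*}\mathcal{F}_{\omega}$ the strict transform and $\widetilde{S}$ the strict transform of $S:=\mathcal{S}(\mathcal{F}_{\omega})\colon f=0$. In a chart $(x,t)\mapsto(x,xt)$ one has $\sigma^{*}\omega=x^{\mult(\omega)}\widetilde{\omega}$, whereas $\sigma^{*}(\dd f)=x^{\,\ord_{0}(f)-1}\bigl(\ord_{0}(f)\,\widetilde{f}\,\dd x+x\,\dd\widetilde{f}\bigr)$, which is not $\dd\widetilde{f}$. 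Pulling an identity $g\omega=h\,\dd f+f\eta$ through $\sigma$, dividing by the appropriate powers of $x$ and reading off the order of the resulting quotient along $E$, one obtains a formula of the shape
\[
GSV_{0}(\mathcal{F}_{\omega},S)=\sum_{p\in E}GSV_{p}\bigl(\sigma^{*}\mathcal{F}_{\omega},\,\widetilde{S}_{p}\cup E\bigr)+\bigl(\mult(\omega)-\mult(S)\bigr),
\]
where $\widetilde{S}_{p}$ denotes the branches of $\widetilde{S}$ through $p$, $\mult(S):=\mult(\dd f)=\ord_{0}(f)-1$, and, by Theorem \ref{caracterizacion segundo tipo}(2), the last summand is $\ge 0$, vanishing exactly when $\mathcal{F}_{\omega}$ is of the second type at the origin. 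Iterating over the resolution tree and inserting the local computation at the leaves yields
\[
GSV_{0}(\mathcal{F}_{\omega},S)=\sum_{\text{reduced sing.}}GSV_{p}\bigl(\cdot\,,\widetilde{S}_{p}\cup E\bigr)\;+\;\sum_{\text{blown-up points }q}\bigl(\mult-\mult\bigr),
\]
both sums having only non-negative terms. If $GSV(\mathcal{F}_{\omega},S)=0$ the first sum vanishes, so no reduced singularity is a saddle-node, i.e.\ $\mathcal{F}_{\omega}$ is a generalized curve. Conversely, if $\mathcal{F}_{\omega}$ is a generalized curve then no saddle-node occurs, so the first sum is $0$; moreover $\mathcal{F}_{\omega}$ and $S$ have the same multiplicity at every point of the desingularization (Theorem \ref{misma resolucion111}, or Theorem \ref{caracterizacion segundo tipo}(2) applied at each blown-up point, the strict transform germ being again a generalized curve, hence of the second type there), so the second sum is $0$ too; hence $GSV(\mathcal{F}_{\omega},S)=0$.

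\textbf{Main obstacle.} The technical heart is the blow-up formula: one must keep precise track of how the auxiliary functions $g,h$ in $g\omega=h\,\dd f+f\eta$ must be divided by powers of the exceptional coordinate, and identify the surviving contribution along $E$ as the multiplicity defect $\mult(\omega)-\mult(S)$; combined with the non-additivity correction $-2(\mathcal{S}_{i}\cdot\mathcal{S}_{j})$ over the branches of $\widetilde{S}\cup E$ and the bookkeeping of which divisor components meet which strict-transform branches at each infinitely near point, this is the delicate part. One must also handle with care the transverse-saddle-node case, where the weak separatrix is only formal and hence not part of $S$, so that $\widetilde{S}_{p}$ does not reach $p$ and the local index is computed with respect to the divisor alone, which is exactly the situation yielding the value $1$ in the local computation.
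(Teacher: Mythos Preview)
The paper does not prove Theorem~\ref{GSV3}; it is quoted from \cite{Cavalier-Lehmann} and \cite{Brunella1} and used as a black box later in Section~\ref{cuspidal section}. There is therefore no ``paper's own proof'' to compare your attempt against.

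Your sketch follows the strategy that underlies the original proofs (induction on the resolution, local computation of $GSV$ at reduced singularities, non-negativity of all contributions), and your local computations at non-degenerate points and saddle-nodes are correct. However, the argument is not complete: the blow-up identity
\[
GSV_{0}(\mathcal{F}_{\omega},S)=\sum_{p\in E}GSV_{p}\bigl(\sigma^{*}\mathcal{F}_{\omega},\,\widetilde{S}_{p}\cup E\bigr)+\bigl(\mult(\omega)-\mult(S)\bigr)
\]
is asserted, not proved, and you yourself flag it as ``the technical heart''. Two points would need to be made precise before this can be accepted. First, the exact correction term: it is not transparent from the manipulation you indicate (pulling back $g\omega=h\,\dd f+f\eta$ and dividing by powers of the exceptional coordinate) that the defect is exactly $\mult(\omega)-\mult(\dd f)$; this requires tracking the orders of $g,h$ along $E$ and comparing with the self-intersection contribution of $E$ in the branch formula, and a clean derivation should be written out. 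Second, at a tangent saddle-node on $E$ whose weak separatrix is divergent, the full local separatrix set of $\sigma^{*}\mathcal{F}_{\omega}$ strictly contains $\widetilde{S}_{p}\cup E$; you compute $GSV$ with respect to $\widetilde{S}_{p}\cup E$ (which equals $E$ alone there), not with respect to the full separatrix set, so the inductive hypothesis at $p$ is not literally ``$GSV$ of the foliation against its union of separatrices'', and you must check that the induction still closes. In short, the outline is the right one, but as written it is a programme rather than a proof.
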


The next result on generalized curve foliations was obtained by Rouill\'e \cite{Rouille1} and it will be very useful in this paper. Denote by $\mathbb{C}[[t]]$ the ring of formal power series in the variable $t$ and coefficients in $\mathbb{C}$, and $\mathbb{C}\{t\}$ the subring of $\mathbb{C}[[t]]$ of convergent power series.

\begin{lema}\label{orden igual} \cite[Lemme 3.7]{Rouille1} Let $\mathcal{F}_{\omega_{1}}$ and $\mathcal{F}_{\omega_{2}}$ two non-dicritical generalized curve foliations with the same union of separatrices. If $\gamma(t)=(x(t),y(t))\in (\mathbb{C}\{t\})^2$ with $\gamma(0)=0$, then
\[\ord_{t}\gamma^{\ast}\omega_{1}=\ord_{t}\gamma^{\ast}\omega_{2}.\]
\end{lema}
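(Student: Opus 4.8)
The plan is to reduce the statement to a single generalized curve foliation and a single branch, and then to compute $\ord_t\gamma^\ast\omega$ on the common reduction of singularities. First I would record the reductions. By Theorem \ref{misma resolucion111} both $\mathcal{F}_{\omega_1}$ and $\mathcal{F}_{\omega_2}$ have the same reduction of singularities $\pi:M\to(\mathbb{C}^2,0)$, namely the desingularization of their common union of separatrices $\mathcal{S}:f(x,y)=0$. An irreducible curve invariant by $\mathcal{F}_{\omega_i}$ is a separatrix, hence a branch of $f$, hence invariant by $\mathcal{F}_{\omega_{3-i}}$; so if $\gamma$ is constant or parametrizes a branch of $f$, then $\gamma^\ast\omega_1=\gamma^\ast\omega_2=0$ and there is nothing to prove. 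Otherwise $f\circ\gamma\not\equiv 0$, the curve $C$ parametrized by $\gamma$ is not invariant by either foliation, and
\[
\ord_t\gamma^\ast(\dd f)=\ord_t\big((f\circ\gamma)'\big)=\ord_t(f\circ\gamma)-1=(C,\mathcal{S})_0-1,
\]
a quantity depending only on $\mathcal{S}$. Passing to a primitive parametrization rescales $\ord_t\gamma^\ast\omega_i$ and $\ord_t\gamma^\ast(\dd f)$ in the same way, so I may assume $\gamma$ primitive. Thus it suffices to prove: for a non-dicritical generalized curve $\mathcal{F}_\omega$ with union of separatrices $\mathcal{S}:f=0$ and any non-invariant branch $C$ parametrized by $\gamma$, one has $\ord_t\gamma^\ast\omega=\ord_t\gamma^\ast(\dd f)$. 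Applying this to $\omega_1$ and $\omega_2$, which share $f$, yields the lemma.

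To prove that equality I would lift $\gamma$ to a branch $\widetilde\gamma$ of $M$, meeting the exceptional divisor $\mathcal{D}$ at a single point $p$; writing, in local coordinates at $p$ with local equations $e_j$ for the components of $\mathcal{D}$ through $p$, $\pi^\ast\omega=\big(\prod_j e_j^{\,\nu_j(\omega)}\big)\omega'$ and $\pi^\ast(\dd f)=\big(\prod_j e_j^{\,\nu_j(\dd f)}\big)(\dd f)'$ with $\omega'$ and $(\dd f)'$ the strict transforms, one gets
\[
\ord_t\gamma^\ast\omega=\sum_j\nu_j(\omega)\,(\widetilde\gamma,E_j)_p+\ord_t\widetilde\gamma^\ast\omega',\qquad
\ord_t\gamma^\ast(\dd f)=\sum_j\nu_j(\dd f)\,(\widetilde\gamma,E_j)_p+\ord_t\widetilde\gamma^\ast(\dd f)'.
\]
The key point is that $\nu_j(\omega)=\nu_j(\dd f)$ for every component of $\mathcal{D}$. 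For the divisor produced by the first blow-up this follows from Proposition \ref{poligono Newton iguales}: since $\mathcal{N}(\omega)=\mathcal{N}(\dd f)$ their minimal levels $\min\{i+j\}$ coincide, which forces $\mult(\omega)=\mult(f)-1$, i.e. the order of the first-blow-up pullback of $\omega$ along the first exceptional component equals that of $\dd f$. Since blowing up a non-dicritical generalized curve again produces generalized curves at all infinitely near points, whose separatrices are the strict transforms of the branches of $f$, I would repeat this argument along the whole tower (keeping track of the corners) to get $\nu_j(\omega)=\nu_j(\dd f)$ in general. Granting this, the two displayed sums differ only by $\ord_t\widetilde\gamma^\ast\omega'-\ord_t\widetilde\gamma^\ast(\dd f)'$.

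Finally, $p$ is either a regular point of the strict transform of $\mathcal{F}_\omega$ or a simple singularity; because $\mathcal{F}_\omega$ is a generalized curve it is not a saddle-node, so its eigenvalues $\lambda,\mu$ are nonzero with $\lambda/\mu\notin\mathbb{Q}^+$ and both local eigendirections are smooth analytic separatrices. In coordinates at $p$ in which the relevant invariant axes are the coordinate axes I would write $\omega'=y\widetilde A\,\dd x+x\widetilde B\,\dd y$ with $\widetilde A(0)\widetilde B(0)\neq 0$ and $(\dd f)'$ equal, up to a unit, to $y\,\dd x+x\,\dd y$. Setting $a=\ord_t(x\circ\widetilde\gamma)$ and $b=\ord_t(y\circ\widetilde\gamma)$ (both finite, since $\widetilde\gamma$ is not a coordinate axis), a direct computation gives $\ord_t\widetilde\gamma^\ast(\dd f)'=a+b-1$ and shows that the coefficient of $t^{a+b-1}$ in $\widetilde\gamma^\ast\omega'$ is a nonzero constant times $a\widetilde A(0)+b\widetilde B(0)$, which is nonzero precisely because $a\widetilde A(0)+b\widetilde B(0)=0$ would force $\lambda/\mu=a/b\in\mathbb{Q}^+$. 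Hence $\ord_t\widetilde\gamma^\ast\omega'=a+b-1=\ord_t\widetilde\gamma^\ast(\dd f)'$; the cases where $p$ is regular, is a corner, or has one axis contained in $\mathcal{D}$ are handled in the same way or more easily. This gives the equality, and the lemma.

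The main difficulty will be propagating the equality $\nu_j(\omega)=\nu_j(\dd f)$ along the whole reduction tower: beyond the base case furnished by Proposition \ref{poligono Newton iguales} one must verify that the generalized-curve condition survives every blow-up and control the divisorial orders at the corners, and it is here, together with the base-case computation, that the absence of saddle-nodes is essential. An alternative, avoiding part of the resolution bookkeeping, would be to argue with the GSV index: writing $g\omega=h\,\dd f+f\eta$, Theorem \ref{GSV3} gives $GSV(\mathcal{F}_\omega,\mathcal{S})=0$, i.e. $h/g$ has order $0$ along every branch of $\mathcal{S}$; pulling this identity back along $\gamma$ and bounding the order of $f(\gamma)\,\gamma^\ast\eta/g(\gamma)$ above the expected value would yield $\ord_t\gamma^\ast\omega=\ord_t\gamma^\ast(\dd f)$ directly, the delicate estimate on the $f\eta$ term again reflecting the local structure at the reduced singularities.
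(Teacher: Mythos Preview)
The paper does not prove this lemma; it is quoted from Rouill\'e and later used as a black box in the base case of Proposition~\ref{pieza}. Your argument is correct and is essentially the same strategy the paper uses to prove the generalization in Proposition~\ref{pieza}: induction along the reduction of singularities, using the equality of multiplicities at each stage (your $\nu_j(\omega)=\nu_j(\dd f)$, obtained there from Theorem~\ref{caracterizacion segundo tipo} rather than from Proposition~\ref{poligono Newton iguales}), together with a direct leading-term computation at the final reduced singularity. Your version packages the induction as a single lift to the full resolution followed by an additive decomposition of $\ord_t$, but the content is the same; in particular your endpoint check that $a\widetilde A(0)+b\widetilde B(0)\neq 0$ because $\lambda/\mu\notin\mathbb{Q}^{+}$ is precisely the non-degenerate analogue of the saddle-node computation the paper carries out explicitly in the proof of Proposition~\ref{pieza}.
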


We deduce from Example \ref{segundo tipo} that there are foliations having the same polygon as their union of separatrices but they are not generalized curve foliations. Our objective in this paper is to characterize the foliations having the same Newton polygon that its union of separatrices. They will be the non-dicritical foliations of the second type.\\

\section{Characterization of a foliation of the second type in terms of the Newton polygon} \label{characterization}
In this section, a new characterization of the second-type non-dicritical foliations is given in terms of the Newton polygon of the foliation and that of its union of separatrices.

\begin{lema}\label{L1} Let $\widehat{\mathcal{F}_{\omega}}$ be a non-dicritical foliation and $\widehat{f}(x,y)=0$ be a reduced equation of its union of separatrices. If $\mathcal{N}(\widehat{\omega})=\mathcal{N}(\widehat{f})$ then $\widehat{\mathcal{F}_{\omega}}$ is a foliation of the second type.
\end{lema}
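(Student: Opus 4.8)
The plan is to argue by contraposition: assuming $\widehat{\mathcal{F}_{\omega}}$ is \emph{not} of the second type, I will produce a tangent saddle-node somewhere in the resolution and use it to show that the Newton polygon of the foliation must be strictly larger than that of its union of separatrices, i.e.\ $\mathcal{N}(\widehat{\omega})\neq\mathcal{N}(\widehat{f})$. The key bookkeeping device is the behaviour of Newton polygons under blow-up: for a single blow-up $\pi_1$ at the origin, the Newton polygon of the strict transform $\pi_1^{*}\widehat{\omega}$ in each chart is obtained from $\mathcal{N}(\widehat{\omega})$ by the usual linear transformation (shear by the divisor exponent), and the same is true for $\widehat{f}$ and its strict transform. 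So the identity $\mathcal{N}(\widehat{\omega})=\mathcal{N}(\widehat{f})$ propagates down the tower of blow-ups at every point through which the separatrix passes, as long as no discrepancy is introduced.

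First I would set up the resolution tower $\pi\colon (M,\mathcal{D})\to(\mathbb{C}^2,0)$ of $\widehat{\mathcal{F}_{\omega}}$ and invoke Theorem \ref{caracterizacion segundo tipo}(1): $\pi$ is also a reduction of singularities of $\widehat{\mathcal{S}}(\widehat{\mathcal{F}_{\omega}})$, so along the way the foliation and the curve are being desingularized simultaneously. Next I would track the Newton polygon invariant chart by chart: after each infinitely near point $p$ on the strict transform of $\widehat{f}$, the local Newton polygon of the foliation and that of the (local branch of the) separatrix transform in exactly the same way under the monomial substitution defining the chart, precisely because the vertices of a Newton polygon and the divisor exponents transform by the same affine map. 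Hence if at the start $\mathcal{N}(\widehat{\omega})=\mathcal{N}(\widehat{f})$, then at every infinitely near point $p$ lying on the strict transform of the separatrix we still have equality of the local Newton polygons of the transformed foliation $\widehat{\mathcal{F}}'_{\omega}$ and the transformed curve. At the final step, the reduced foliation has a simple singularity at each such $p$; a simple singularity whose separatrix/separatrices are exactly the local branches of $\widehat{f}$ at $p$ and whose Newton polygon matches that of those branches cannot be a tangent saddle-node, because a tangent saddle-node has its weak (formal) separatrix inside the divisor, which forces an extra monomial below the ``curve'' polygon and hence a strictly larger Newton polygon for the foliation. Points of $\mathcal{D}$ not on the strict transform of $\widehat{f}$ carry no singularity of the reduced foliation at all (the foliation being non-dicritical, its reduced singularities all sit on separatrices), so there is no tangent saddle-node there either. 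Therefore no tangent saddle-node occurs anywhere, i.e.\ $\widehat{\mathcal{F}_{\omega}}$ is of the second type.

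The main obstacle I anticipate is making the ``Newton polygon transforms the same way'' step fully rigorous at a \emph{corner} of the divisor: there the local picture involves two divisor components, the blow-up substitution mixes both exponents, and one must check that the relevant edge of $\mathcal{N}(\widehat{\omega})$ (the one detecting a potential tangent saddle-node, namely the edge touching the axis corresponding to the weak separatrix) is the one that governs the comparison, and that it behaves functorially under the shear. Concretely I would: (i) reduce to the model form of a simple singularity and its saddle-node normal form $\big(y(1+\lambda x^{k})\,\dd x + x^{k+1}\,\dd y\big)$-type after the final blow-up; (ii) read off that the Newton polygon of such a saddle-node, when the weak separatrix $x=0$ is in the divisor, has a vertex not present in $\mathcal{N}(\dd f)$ for $f$ equal to the product of the actual (strong, and possibly one more) separatrices; and (iii) pull this strict inequality of local polygons all the way back up to the origin, where it contradicts $\mathcal{N}(\widehat{\omega})=\mathcal{N}(\widehat{f})$. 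A secondary technical point is that one should work with the \emph{formal} separatrix set (the weak separatrix of a saddle-node may be purely formal), which is exactly why the statement is phrased over $\mathbb{C}[[x,y]]$; Theorem \ref{caracterizacion segundo tipo} already accommodates this.
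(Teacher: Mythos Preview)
Your approach is far more elaborate than necessary, and it contains a genuine gap. The paper's proof is two lines: the multiplicity of $\widehat{\omega}$ is read off directly from its Newton polygon as $\min\{i+j-1:(i,j)\in\mathcal{N}(\widehat{\omega})\}$, and likewise for $\dd\widehat{f}$; hence $\mathcal{N}(\widehat{\omega})=\mathcal{N}(\widehat{f})$ forces $\mult(\widehat{\omega})=\mult(\dd\widehat{f})$, and Theorem~\ref{caracterizacion segundo tipo}(2) (Mattei--Salem) then gives second type immediately. You never need to enter the resolution tower at all.

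The gap in your contrapositive argument is the sentence ``Points of $\mathcal{D}$ not on the strict transform of $\widehat{f}$ carry no singularity of the reduced foliation at all.'' This is false: in the non-dicritical case every divisor component is invariant, so every \emph{corner} of $\mathcal{D}$ is a singular point of the reduced foliation, and no branch of the strict transform of $\widehat{f}$ passes through a corner. A saddle-node sitting at a corner has both its separatrices inside the divisor, and since one of them is necessarily the weak one, such a corner saddle-node is automatically a \emph{tangent} saddle-node. Your tracking along the strict transform of $\widehat{f}$ therefore misses exactly the corner tangent saddle-nodes, which are a principal way a foliation can fail to be of second type. Even setting this aside, your propagation step (``Newton polygon equality is preserved under blow-up'') is not the simple shear you describe: passing to the strict transform at a point $t_0$ of the exceptional divisor requires re-expanding around $t=t_0$, which depends on the actual coefficients of $\widehat{\omega}$ and $\widehat{f}$, not just their supports; and it implicitly uses that the divisor exponents (i.e.\ the multiplicities) agree at each step, which is already the full content of Mattei--Salem.
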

\begin{proof}
Consider the foliation $\widehat{\mathcal{F}_{\omega}}$ given by $\widehat{\omega}=\displaystyle\sum_{i,j}\widehat{A}_{ij}x^{i-1}y^{j}\dd x + \displaystyle\sum_{i,j}\widehat{B}_{ij}x^{i}y^{j-1}\dd y$. Since $\mult(\widehat{\omega})=\min\{\ord(\widehat{A}),\ord(\widehat{B})\}$ then 
\begin{equation}\label{mul segundo tipo}
\begin{array}{lll}
  \mult(\widehat{\omega}) & = & \min\{i+j-1:(i,j) \in \mathcal{N(\widehat{\omega})}\} \\
   & = & \min\{i+j-1:(i,j) \in \mathcal{N}(\dd \widehat{f})\} \\
   & = & \mult(\dd \widehat{f}).
\end{array}
\end{equation}
From (\ref{mul segundo tipo}) and the second statement of Theorem \ref{caracterizacion segundo tipo} we finish the proof.
\end{proof}

As a consequence of Lemma \ref{L1} and Theorem \ref{caracterizacion segundo tipo} we conclude that if $\mathcal{N}(\widehat{\omega})=\mathcal{N}(\widehat{f})$ then the foliation $\widehat{\mathcal{F}_{\omega}}$ and its union of separatrices $\widehat{\mathcal{S}}(\widehat{\mathcal{F}_{\omega}})$ have the same resolution.

In the following proposition we generalize Lemma \ref{orden igual} to second type foliations.

\begin{propo}\label{pieza} Let $\widehat{\mathcal{F}_{\omega}}$ be a non-dicritical second type foliation and $\widehat{\mathcal{S}}(\widehat{\mathcal{F}_{\omega}})$ its union of separatrices .
If $\gamma(t)=(x(t),y(t))\in (\mathbf C[[t]])^2$, with $\gamma(0)=0$, then
\[
\ord_{t}\gamma^{\ast}\widehat{\omega}=\ord_{t}\gamma^{\ast}\dd \widehat{f}.
\]
\end{propo}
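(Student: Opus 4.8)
The plan is to pass to the common minimal reduction of singularities of $\widehat{\mathcal F_{\omega}}$, $\widehat{\mathcal S}(\widehat{\mathcal F_{\omega}})$ and $\mathcal F_{\dd\widehat f}$, and to compare the two pulled-back $1$-forms there, isolating the contribution of the exceptional divisor from that of the strict transforms. Since $\widehat f$ is reduced, $\mathcal F_{\dd\widehat f}$ is a non-dicritical foliation whose union of separatrices is again $\{\widehat f=0\}$, and it is of the second type (its multiplicity trivially agrees with the multiplicity of its union of separatrices, so Theorem~\ref{caracterizacion segundo tipo}(2) applies; equivalently, $\mathcal F_{\dd\widehat f}$ is a generalized curve). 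By Theorem~\ref{caracterizacion segundo tipo}(1) the minimal reduction of singularities $\pi\colon(M,\mathcal D)\to(\mathbb C^{2},0)$ of $\widehat{\mathcal F_{\omega}}$ is also the minimal reduction of singularities of $\widehat{\mathcal S}(\widehat{\mathcal F_{\omega}})$ and, by Theorem~\ref{misma resolucion111}, of $\mathcal F_{\dd\widehat f}$, so $\pi$ resolves all three and they share the same resolution tree. We may assume $\gamma\not\equiv 0$. Then $\gamma$ lifts to a formal curve $\widetilde\gamma$ on $M$ with $\pi\circ\widetilde\gamma=\gamma$, $\widetilde\gamma\not\subset\mathcal D$ and $p:=\widetilde\gamma(0)\in\mathcal D$, and, $\ord_{t}$ being a pull-back invariant,
\[
\ord_{t}\gamma^{\ast}\widehat\omega=\ord_{t}\widetilde\gamma^{\ast}(\pi^{\ast}\widehat\omega),\qquad \ord_{t}\gamma^{\ast}\dd\widehat f=\ord_{t}\widetilde\gamma^{\ast}\big(\dd(\pi^{\ast}\widehat f)\big).
\]
Writing $\pi^{\ast}\widehat\omega=\mathcal E_{\omega}\,\widehat\omega'$ and $\dd(\pi^{\ast}\widehat f)=\mathcal E_{0}\,\widehat\omega'_{0}$, where $\widehat\omega'$ and $\widehat\omega'_{0}$ generate the strict transforms of $\widehat{\mathcal F_{\omega}}$ and $\mathcal F_{\dd\widehat f}$ (with only reduced singularities, since $\pi$ resolves both) and $\mathcal E_{\omega}=\prod_{j}e_{j}^{\,\nu_{j}}$, $\mathcal E_{0}=\prod_{j}e_{j}^{\,\mu_{j}}$ are the equations of the zero divisors of these $1$-forms, supported on $\mathcal D$ since the foliations are non-dicritical, it suffices, by additivity of $\ord_{t}$, to prove $(a)$ $\ord_{t}(\mathcal E_{\omega}\circ\widetilde\gamma)=\ord_{t}(\mathcal E_{0}\circ\widetilde\gamma)$ and $(b)$ $\ord_{t}\widetilde\gamma^{\ast}\widehat\omega'=\ord_{t}\widetilde\gamma^{\ast}\widehat\omega'_{0}$.

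For $(a)$, the exponents $\nu_{j}$ (resp. $\mu_{j}$) are determined, through the common resolution tree, by the multiplicities of the successive strict transforms of $\widehat{\mathcal F_{\omega}}$ (resp. of $\mathcal F_{\dd\widehat f}$) at the infinitely near points blown up by $\pi$: when a component is born at a point $q$, non-dicriticality makes it invariant and it enters the zero divisor with multiplicity one more than that of the foliation being blown up at $q$, added to the exponents inherited from the components through $q$. At each such $q$ the germ of the strict transform of $\widehat{\mathcal F_{\omega}}$ is again of the second type, and, since the separatrices of $\widehat{\mathcal F_{\omega}}$ are exactly the branches of $\widehat f$, its union of separatrices is the set of exceptional components through $q$ together with the strict transforms of the branches of $\widehat f$ through $q$; this is the same as the union of separatrices of the germ at $q$ of $\mathcal F_{\dd\widehat f}$, which is again of the second type. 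Applying Theorem~\ref{caracterizacion segundo tipo}(2) to both germs at each $q$ shows that the corresponding multiplicities coincide, whence $\nu_{j}=\mu_{j}$ for all $j$; thus $\mathcal E_{\omega}$ and $\mathcal E_{0}$ define the same divisor and both sides of $(a)$ equal $\sum_{j}\nu_{j}\,\ord_{t}(e_{j}\circ\widetilde\gamma)$.

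For $(b)$, at $p$ both $\widehat\omega'$ and $\widehat\omega'_{0}$ are, after the reduction, either regular or at a reduced singularity, and they have exactly the same invariant curves through $p$: among the (at most two) exceptional components through $p$ and the at most one strict transform of a branch of $\widehat f$ through $p$, the number of these actually passing through $p$ decides whether each of $\widehat\omega'$, $\widehat\omega'_{0}$ is regular or singular at $p$ and, when singular, which two of them are its separatrices. The conclusion then follows from the elementary local fact that, for a germ $\omega=0$ regular at $0$ with local leaf $\{s=0\}$, one has $\ord_{t}\delta^{\ast}\omega=\ord_{t}(s\circ\delta)-1$, while for a reduced singularity at $0$ with separatrices $\{s_{1}=0\}$, $\{s_{2}=0\}$ — be it non-degenerate or a saddle-node transverse to an invariant smooth curve, whose weak separatrix is then the strict transform of a branch of $\widehat f$ — one has $\ord_{t}\delta^{\ast}\omega=\ord_{t}(s_{1}\circ\delta)+\ord_{t}(s_{2}\circ\delta)-1$ whenever $\delta$ lies in neither separatrix (both sides being $+\infty$ otherwise); in the saddle-node case one uses that, in coordinates in which the strong and weak separatrices are the axes, the coefficient of the $1$-form along the weak separatrix vanishes at $0$, so its summand in $\delta^{\ast}\omega$ has strictly higher $t$-order than the other. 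Since these formulas involve only $\widetilde\gamma$ and the invariant curves through $p$, which are the same for $\widehat\omega'$ and $\widehat\omega'_{0}$, claim $(b)$ holds, and together with $(a)$ this proves the proposition.

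The new difficulty compared with Lemma~\ref{orden igual} is precisely the passage through the transverse saddle-nodes that may appear in the reduction of a second-type foliation: in $(a)$ one must know that their multiplicities are still governed by Theorem~\ref{caracterizacion segundo tipo}, and, chiefly, in $(b)$ one must verify that a transverse saddle-node contributes to $\ord_{t}\widetilde\gamma^{\ast}\widehat\omega'$ exactly as a non-degenerate reduced singularity would. This is where the hypothesis that $\widehat{\mathcal F_{\omega}}$ is of the \emph{second type} — no tangent saddle-node, in particular none at a corner, so that every saddle-node has two distinct separatrices, one of which (the weak one) is a strict transform of a branch of $\widehat f$ — enters in an essential way.
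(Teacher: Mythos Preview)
Your argument is correct and, in content, is the same as the paper's, but packaged globally rather than inductively. The paper proceeds by induction on the length of the resolution: at each blow-up, Theorem~\ref{caracterizacion segundo tipo}(2) equates the two multiplicities, so the factor $x(t)^{m}$ pulled out of $\gamma^{\ast}\widehat\omega$ and of $\gamma^{\ast}\dd\widehat f$ is the same, and one recurses on the strict transforms; the base case (a reduced germ) is split into the non-degenerate situation, handled by Lemma~\ref{orden igual}, and an explicit Dulac-form computation for the saddle-node. You unroll this induction onto the full resolution: your part~$(a)$ is precisely the statement that the accumulated exceptional exponents $\nu_j,\mu_j$ agree, which is what the iterated use of Theorem~\ref{caracterizacion segundo tipo}(2) yields, and your part~$(b)$ is the base case, with your uniform formula $\ord_t\delta^{\ast}\omega=\ord_t(s_1\circ\delta)+\ord_t(s_2\circ\delta)-1$ replacing the paper's case split. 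Your packaging makes the structure of the invariant (divisorial part plus local tangency at the landing point) more transparent; the paper's induction is slightly more self-contained.

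Two small remarks. First, a slip in $(a)$: when a new component is born at a point where the (non-dicritical) foliation has multiplicity $m$, the new exponent is $m$ plus the inherited exponents, not $m+1$; this is harmless here since the same recursion governs both $\nu_j$ and $\mu_j$. Second, in $(a)$ you are invoking Theorem~\ref{caracterizacion segundo tipo}(2) at germs already carrying a boundary divisor (the exceptional components through $q$); this is the divisorial form in which Mattei--Salem prove the result, and the paper's inductive step tacitly uses the same extension.
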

\begin{proof} If $\gamma(t)=(x(t),y(t))$ is a parameterization of a separatrix of $\widehat{\omega}$ and $\dd \widehat{f}$ then $\widehat{\omega}(\gamma(t)).\gamma'(t)=0=\dd \widehat{f}(\gamma(t)).\gamma'(t)$ and we conclude the proposition in such a case. \\

Suppose now that $\gamma(t)$ is not a parameterization of any separatrix of $\widehat{\omega}$ and $\dd \widehat{f}$. We proceed by induction on the number of blows-up $n$ needed in the process of the desingularization of  the foliation $\widehat{\mathcal{F}_{\omega}}$. First we suppose that the number of blows-up is $n=0$. Then the foliations defined by the one-forms $\widehat{\omega}$ and $\dd \widehat{f}$ are reduced. If $\widehat{\mathcal{F}_{\omega}}$ is a generalized curve foliation then the proposition follows from Lemma \ref{orden igual}.\\

Suppose now that $\widehat{\mathcal{F}_{\omega}}$ is a reduced foliation with a saddle-node. We can consider the formal form of the saddle-node given by the equation:
\begin{equation*}
-y^{p+1}\dd x+(1+\lambda y^{p})x\dd y\;\; \hbox{\rm with }p\geq 1 \;\; \hbox{\rm and } \;\; \lambda \in \mathbb{C},
\end{equation*}
which under a change of coordinates can be expressed as (see \cite[Page 66]{Camacho-Sad2})
\begin{equation*}
\widehat{\omega}=x(1+\lambda y^{p})\dd y-y^{p+1}\dd x,
\end{equation*}
and the reduced equation of its union of formal separatrices is given by $\widehat{f}(x,y)=xy$. We can write $\gamma(t)=(x(t),y(t))=(t^{a}n_{1}(t),t^{b}n_{2}(t))$, where $a, b$ are positive integers and $n_{i}(t)$ are units of $\mathbb{C}[[t]]$  (that is $n_i(0)\neq 0$ for $i=1,2$). We have
\[
\begin{array}{lll}
   \gamma^{*}\widehat{\omega}
   &=& [bt^{a+b-1}n_{1}(t)n_{2}(t)+t^{a+b}n_{1}(t)n'_{2}(t)\\
   &+&
   t^{a+b+pb-1}n_{1}(t)(n_{2}(t))^{p+1}(\lambda b-a)\\
   &+&\lambda t^{a+b+pb}n_{1}(t)(n_{2}(t))^{p}n'_{2}(t)-t^{a+b+pb}n'_{1}(t)(n_{2}(t))^{p+1}] \dd t,
\end{array}
\]
so $\mult(\gamma^{*}\widehat{\omega})=a+b-1$. On the other hand $\dd \widehat{f}=y\dd x+x\dd y,$ hence
\[
\begin{array}{lll}
   \gamma^{*}\dd \widehat{f}
   & = & t^{b}n_{2}(t)(at^{a-1}n_{1}(t)+t^{a}n'_{1}(t))\dd t\\ &+& t^{a}n_{1}(t)(bt^{b-1}n_{2}(t)+t^{b}n'_{2}(t))\dd t\\
   & = & [t^{a+b-1}n_{1}(t)n_{2}(t)(a+b)+
   t^{a+b}(n'_{1}(t)n_{2}(t)+n_{1}(t)n'_{2})]\dd t,
\end{array}
\]
so $\mult(\gamma^{*} \dd \widehat{f})=a+b-1$. Therefore, if $\widehat{\mathcal{F}_{\omega}}$ is a reduced foliation with a saddle-node, then $\ord_{t}(\gamma^{*}\widehat{\omega})=\ord_{t}(\gamma^{*}\dd \widehat{f})$.\\

Now we suppose that the foliations defined by the one-forms $\widehat{\omega}$ and $\dd \widehat{f}$ are not reduced and $n>0$. Let $E$ be the blow-up at the origin $(x,y)=(0,0)$ given by $E:(x,t)=(x,xt)$, so $E^{*}\widehat{\omega}=x^{m_{1}}\widetilde{\widehat{\omega}}$, where $m_{1}$ is the multiplicity of $\widehat{\omega}$ and $\widetilde{\widehat{\omega}}$ is the strict transform of $\widehat{\omega}$. Denote by $\widetilde{\gamma}$ the strict transformation  of the curve $\gamma$ by $E$. By  induction hypothesis, we get,
\[
\ord_{t}\widetilde{\gamma}^{*}\widetilde{\widehat{\omega}}=\ord_{t}\widetilde{\gamma}^{*}\widetilde{\dd \widehat{f}}.
\]

On the other hand we have
$\begin{array}{lll}
    \gamma^{*}\widehat{\omega} = x(t)^{m_{1}}\widetilde{\gamma}^{*}\widetilde{\widehat{\omega}},
\end{array}$
hence \begin{equation} \label{orden w1 y w2}
\begin{array}{lll}
    \ord_{t}\gamma^{*}\widehat{\omega} = \mult(x(t))\mult(\widehat{\omega})+\ord_{t}\widetilde{\gamma}^{*}\widetilde{\widehat{\omega}}.
  \end{array}
\end{equation}
Since the foliation $\widehat{\mathcal{F}_{\omega}}$ is of the second type, by  Theorem \ref{caracterizacion segundo tipo}, using the induction hypothesis and replacing in the equation (\ref{orden w1 y w2}) we get $\ord_{t}\gamma^{\ast}\widehat{\omega}=\ord_{t}\gamma^{\ast}\dd \widehat{f}$ and we finish the proof of the proposition.
\end{proof}

Proposition \ref{pieza} was also given in \cite[Corollary 1]{Cano2-Corral-Mol}, but with other proof.

\begin{coro}\label{poligonoigual222}Let $\widehat{\mathcal{F}_{\omega}}$ be a non-dicritical second type foliation and let $\widehat{\mathcal{S}_{f}}$ be a reduced equation of its union of formal separatrices. Then $\mathcal{N}(\widehat{\omega})=\mathcal{N}(\dd \widehat{f})$.
\end{coro}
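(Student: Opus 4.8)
The plan is to read the equality of the two Newton polygons off Proposition \ref{pieza} by pulling $\widehat{\omega}$ and $\dd\widehat{f}$ back to a well chosen family of monomial curves. The point of departure is that, for a subset $T\subseteq\mathbb{N}^{2}$, the polygon $\mathcal{N}(T)$ is completely determined by the numbers $\nu_{a,b}(T):=\min\{\,ai+bj:(i,j)\in T\,\}$, with $(a,b)$ running over the pairs of coprime positive integers: these are the values of the support functional of $D(T)$ in the inner normal directions, and once every compact side of $\mathcal{N}(T)$ and its endpoints are pinned down, the two unbounded sides are forced as well. So it suffices to prove
\[
\nu_{a,b}\bigl(\sop(\widehat{\omega})\bigr)=\nu_{a,b}\bigl(\sop(\dd\widehat{f})\bigr)\qquad\text{for every coprime pair }a,b\in\mathbb{Z}_{>0}.
\]

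First I would fix such a pair and, for $c\in\mathbb{C}\setminus\{0\}$, set $\gamma_{c}(t)=(t^{a},ct^{b})\in(\mathbb{C}[[t]])^{2}$. For any one-form $\widehat{\varpi}=\widehat{P}\,\dd x+\widehat{Q}\,\dd y$ one has $\gamma_{c}^{*}\widehat{\varpi}=\bigl(a\,t^{a-1}\widehat{P}(t^{a},ct^{b})+bc\,t^{b-1}\widehat{Q}(t^{a},ct^{b})\bigr)\dd t$, and, since $\sop(\widehat{\varpi})=\sop(x\widehat{P})\cup\sop(y\widehat{Q})$, this gives immediately $\ord_{t}\gamma_{c}^{*}\widehat{\varpi}\geq\nu_{a,b}(\sop(\widehat{\varpi}))-1$ for all $c$, with equality exactly when the one-variable edge polynomial
\[
\Phi_{\widehat{\varpi}}(c):=\sum_{\,ai+bj=\nu_{a,b}(\sop(\widehat{\varpi}))}\bigl(a\widehat{P}_{ij}+b\widehat{Q}_{ij}\bigr)c^{\,j}
\]
does not vanish at $c$, where $\widehat{P}_{ij}$ and $\widehat{Q}_{ij}$ are the coefficients of $x^{i-1}y^{j}$ in $\widehat{P}$ and of $x^{i}y^{j-1}$ in $\widehat{Q}$. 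In particular, whenever $\Phi_{\widehat{\varpi}}\not\equiv 0$, one has $\ord_{t}\gamma_{c}^{*}\widehat{\varpi}=\nu_{a,b}(\sop(\widehat{\varpi}))-1$ for all but finitely many $c$.

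Then I would verify $\Phi_{\widehat{\varpi}}\not\equiv 0$ in the two relevant cases. For $\widehat{\varpi}=\dd\widehat{f}$ one computes $a\widehat{P}_{ij}+b\widehat{Q}_{ij}=(ai+bj)\,\widehat{f}_{ij}$, where $\widehat{f}_{ij}$ is the coefficient of $x^{i}y^{j}$ in $\widehat{f}$; hence $\Phi_{\dd\widehat{f}}(c)=\nu\sum_{ai+bj=\nu}\widehat{f}_{ij}c^{\,j}$ with $\nu:=\nu_{a,b}(\sop(\dd\widehat{f}))>0$ (as $\widehat{f}\in\widehat{\mathfrak{m}}$), and this is not identically zero because the side of $\mathcal{N}(\dd\widehat{f})$ with inner normal $(a,b)$ is non-empty; so $\ord_{t}\gamma_{c}^{*}\dd\widehat{f}=\nu_{a,b}(\sop(\dd\widehat{f}))-1$ for generic $c$. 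For $\widehat{\varpi}=\widehat{\omega}$, $\Phi_{\widehat{\omega}}\equiv 0$ would force the $(a,b)$-quasi-homogeneous leading part of $\widehat{\omega}$ to annihilate the weighted radial field $ax\partial_{x}+by\partial_{y}$, i.e.\ to have the pencil $\{x^{b}=cy^{a}\}$ among its leaves; after the blow-up of weight $(a,b)$ this pencil becomes a non-invariant exceptional component, which is impossible since $\widehat{\mathcal{F}_{\omega}}$ is non-dicritical. Hence $\Phi_{\widehat{\omega}}\not\equiv 0$ and $\ord_{t}\gamma_{c}^{*}\widehat{\omega}=\nu_{a,b}(\sop(\widehat{\omega}))-1$ for generic $c$.

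Finally I would choose $c$ outside the (finite) union of the two bad sets and apply Proposition \ref{pieza}: $\nu_{a,b}(\sop(\widehat{\omega}))-1=\ord_{t}\gamma_{c}^{*}\widehat{\omega}=\ord_{t}\gamma_{c}^{*}\dd\widehat{f}=\nu_{a,b}(\sop(\dd\widehat{f}))-1$; as $(a,b)$ was arbitrary, the first paragraph yields $\mathcal{N}(\widehat{\omega})=\mathcal{N}(\dd\widehat{f})$. The identity for $\gamma_{c}^{*}\widehat{\varpi}$ and the manipulations with supports are routine; the one delicate point — and the step I expect to require the real work — is ruling out a dicritical side of $\mathcal{N}(\widehat{\omega})$, i.e.\ the assertion that the Newton polygon of a non-dicritical foliation has no dicritical side, which has to be justified (by the dicritical-side criterion, or directly from the weighted blow-up).
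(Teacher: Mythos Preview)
Your approach is correct and is essentially the route the paper takes: it invokes Proposition~\ref{pieza} and then says ``reasoning analogously as in the proof given by Rouill\'e'' for Proposition~\ref{poligono Newton iguales}, which is exactly the monomial test-curve argument you wrote out. The step you flag as delicate---that a non-dicritical foliation has no dicritical side, i.e.\ $\Phi_{\widehat{\omega}}\not\equiv 0$ for every coprime $(a,b)$---is indeed the crux hidden behind the citation to Rouill\'e, and your justification via the weighted blow-up (realized as a chain of point blow-ups, each with invariant exceptional divisor by non-dicriticality) is the standard one.
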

\begin{proof} Since $\widehat{\mathcal{F}_{\omega}}$ is a second type foliation, using
Theorem \ref{caracterizacion segundo tipo}, we have that $\widehat{\mathcal{F}_{\omega}}$ has the same reduction of singularities as its union of formal separatrices and $\mult(\widehat{\omega})=\mult(\dd \widehat{f})$. Reasoning analogously as in the proof given by Rouill\'e \cite{Rouille1} in order to prove the Proposition \ref{poligono Newton iguales}, and by Proposition \ref{pieza}, we finish the proof.
\end{proof}

As a consequence of the Corollary \ref{poligonoigual222} we have:

\begin{coro} Let $\widehat{\mathcal{F}_{\omega_{1}}}$ and $\widehat{\mathcal{F}_{\omega_{2}}}$ be two non-dicritical second type foliations. If  $\widehat{\mathcal{F}_{\omega_{1}}}$ and $\widehat{\mathcal{F}_{\omega_{2}}}$ have the same union of formal separatrices, then $\mathcal{N}(\widehat{\omega_{1}})=\mathcal{N}(\widehat{\omega_{2}})$.
\end{coro}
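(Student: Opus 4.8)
The plan is to derive this immediately from Corollary \ref{poligonoigual222}, which already does all the work. Both foliations are non-dicritical and of the second type, so each of them falls under the hypotheses of that corollary. The only extra input in the statement is that the two foliations share the same union of formal separatrices, so a single reduced equation $\widehat f(x,y)=0$ serves as a reduced equation of the union of formal separatrices for both $\widehat{\mathcal F_{\omega_1}}$ and $\widehat{\mathcal F_{\omega_2}}$.

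First I would apply Corollary \ref{poligonoigual222} to $\widehat{\mathcal F_{\omega_1}}$ with this common $\widehat f$, obtaining $\mathcal N(\widehat{\omega_1})=\mathcal N(\dd\widehat f)$. Then I would apply it again to $\widehat{\mathcal F_{\omega_2}}$ with the same $\widehat f$, obtaining $\mathcal N(\widehat{\omega_2})=\mathcal N(\dd\widehat f)$. Chaining the two equalities through $\mathcal N(\dd\widehat f)$ gives $\mathcal N(\widehat{\omega_1})=\mathcal N(\widehat{\omega_2})$, which is exactly the claim.

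There is essentially no obstacle here: the content is entirely contained in Corollary \ref{poligonoigual222} (and, further upstream, in Proposition \ref{pieza} and Theorem \ref{caracterizacion segundo tipo}). The only point worth a word of care is that the Newton polygon $\mathcal N(\dd\widehat f)$ depends only on the curve $\widehat{\mathcal S}(\widehat{\mathcal F_{\omega_i}})$ and not on the particular reduced equation chosen for it, so that it is legitimate to speak of ``the'' $\mathcal N(\dd\widehat f)$ attached to the common union of separatrices; this is clear since any two reduced equations of the same formal curve differ by a unit, which does not change the support up to the relevant translation. Hence the corollary follows in one line.

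\begin{proof}
Let $\widehat{f}(x,y)=0$ be a reduced equation of the common union of formal separatrices of $\widehat{\mathcal F_{\omega_1}}$ and $\widehat{\mathcal F_{\omega_2}}$. Since both foliations are non-dicritical and of the second type, Corollary \ref{poligonoigual222} applied to each of them gives $\mathcal N(\widehat{\omega_1})=\mathcal N(\dd\widehat f)$ and $\mathcal N(\widehat{\omega_2})=\mathcal N(\dd\widehat f)$. Therefore $\mathcal N(\widehat{\omega_1})=\mathcal N(\widehat{\omega_2})$.
\end{proof}
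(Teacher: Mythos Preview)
Your proof is correct and matches the paper's approach exactly: the paper states this corollary without proof, introducing it simply with ``As a consequence of the Corollary \ref{poligonoigual222} we have,'' and your argument spells out precisely that one-line deduction. The extra remark about independence from the choice of reduced equation is harmless but unnecessary, since you already fix a single common $\widehat f$ for both foliations.
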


\begin{ejemplo} The foliation $\mathcal{F}_{\omega}$ given by $\omega=(ny+x^{n})\dd x-x\dd y$, $n \geq 1$ is not a foliation of the second type. The union of separatrices of $\mathcal{F}_{\omega}$ is $\mathcal{S}(\mathcal{F}_{\omega}):x=0$. We observe that $\sop(\omega)=\{(1,1),(n+1,0)\}$ and $\sop(f)=\{(1,0)\}$, hence the Newton polygons  of  $\mathcal{F}_{\omega}$ and
$\mathcal{S}(\mathcal{F}_{\omega})$ are different.

\begin{center}
\begin{tikzpicture}[x=0.9cm,y=0.9cm]
\tikzstyle{every node}=[font=\small]
 (1,2) -- (1,1) -- (n+1,0) -- cycle;
\draw[->] (0,0) -- (4,0) node[right,below] {$x$};
\draw[->] (0,0) -- (0,2) node[above,left] {$y$};
\draw[thick] (0,1) node[left] {1};
\draw[thick] (1.5,-0.5) node[left] {1};
\node[draw,circle,inner sep=1pt,fill] at (1,1) {};
\draw[thick] (3.5,-0.5) node[left] {n+1};
\node[draw,circle,inner sep=1pt,fill] at (3,0) {};
\draw[thick] (2,2) node[below] {$\mathcal{N}(\omega)$};
\draw[thick] (1,2) -- (1,1);
\draw[thick] (1,1) -- (3,0);
\end{tikzpicture}
\begin{tikzpicture}[x=0.9cm,y=0.9cm]
\tikzstyle{every node}=[font=\small]
 (1,2) -- (1,1) --  cycle;
\draw[->] (0,0) -- (3,0) node[right,below] {$x$};
\draw[->] (0,0) -- (0,2) node[above,left] {$y$};
\draw[thick] (1.5,-0.5) node[left] {1};
\node[draw,circle,inner sep=1pt,fill] at (1,0) {};
\draw[thick] (2,2) node[below] {$\mathcal{N}(f)$};
\draw[thick] (1,2) -- (1,0);
\end{tikzpicture}
\end{center}

\end{ejemplo}

\begin{ejemplo}\label{segundo tipo1} Let us go back to Example \ref{segundo tipo}. The second type foliation given by
$\omega=((b-1)xy-y^3)dx+(xy-bx^2+xy^2)dy$ with $-b, 1-b \not\in \mathbb{Q}^{+}$ has as union of separatrices to $\mathcal{S}(\mathcal{F})=xy(x-y)$. We observe that polygons $\mathcal{N}(\omega)$ and $\mathcal{N}(f)$ are equal.

\begin{center}
\begin{tikzpicture}[x=0.55cm,y=0.55cm]
\tikzstyle{every node}=[font=\small]
\draw[->] (0,0) -- (7,0) node[right,below] {$x$};
\draw[->] (0,0) -- (0,5) node[above,left] {$y$};
(1,5) -- (1,2) -- (2,1) -- (2,7) -- cycle;
\draw (1,5) -- (1,2);
\draw (1,2) -- (2,1);
\draw (2,1) -- (7,1);
\node[draw,circle,inner sep=1pt,fill] at (1,2) {};
\draw[thick] (1.7,1.3) node[left] {(1,2)};
\node[draw,circle,inner sep=1pt,fill] at (2,1) {};
\draw[thick] (2,0.5) node[left] {(2,1)};
\node[draw,circle,inner sep=1pt,fill] at (1,3) {};
\draw[thick] (3,3) node[left] {(1,3)};
\node[draw,circle,inner sep=1pt,fill] at (2,2) {};
\draw[thick] (4,2) node[left] {(2,2)};
\draw[thick] (6,5) node[below] {$\mathcal{N}(\omega)=\mathcal{N}(f)$};
\end{tikzpicture}
\end{center}

\end{ejemplo}

{\bf Proof of Theorem \ref{resultado1}.}
It is an  immediate consequence of Corollary \ref{poligonoigual222} and Lemma \ref{L1}.
\fdp
\\

Theorem \ref{resultado1}  gives a new characterization of the non-dicritical second type foliations using its Newton polygon.

\section{Cuspidal Foliations}\label{cuspidal section}
{\em Cuspidal foliations} are inspired by {\em nilpotent foliations.} A foliation $\mathcal{F}_{\omega}$ in $(\mathbb{C}^{2},0)$ is called a nilpotent singularity if it is generated by a vector field $X$ with a non-zero nilpotent linear part (that is, the matrix associated with the linear part of the field is nilpotent). The nilpotent singularities were generalized to cuspidal singularities by Loray \cite{Loray}, as we shall see below.\\

In this section we characterize when foliations with cuspidal singularities are of the second type in terms of weighted order. Furthermore, by means of the weighted order, we give necessary and sufficient conditions for these foliations to be of generalized curve type.

Given $p, q \in \mathbb{N}^{*}$, we define the {\em weighted degree} of a monomial $x^iy^j$ as

\[
\grad_{(p,q)}(x^{i}y^{j})=\frac{ip+jq}{\gcd(p,q)},
\]

and the {\em weighted order} of  a power series $f(x,y)=\displaystyle\sum_{i,j}a_{i,j}x^{i}y^{j}\in \mathbf C\{x,y\}$ as

\[
\ord_{(p,q)}(f(x,y))=\min\left\{\grad_{(p,q)}(x^{i}y^{j}):a_{i,j}\neq 0\right\}.
\]

\medskip

Remember that according to Loray \cite{Loray}, a foliation with a cuspidal singularity is given as in (\ref{cusp}), that is by
\[
\mathcal{F}_{\omega_{p,q,\Delta}}:\:\omega_{p,q,\Delta}=\dd(y^{p}-x^{q})+\Delta(x,y)(px\dd y-qy\dd x),
\]
where $p,q$ are positive natural numbers and $\Delta(x,y) \in \mathbb{C}\{x,y\}$.

\medskip

On the other hand, remember that  $\PH_{(p,q)}:=\PH(\dd(y^{p}-x^{q}))=pq-p-q+1$.

\medskip

Cuspidal foliations are nilpotent foliations when $p = 2$.

\medskip

For Loray, the foliations $\mathcal{F}_{\omega_{p,q,\Delta}}$ and $\dd(y^p-x^q)$ have the same resolution of singularities if and only if $\ord_{(p,q)}(\Delta)> \frac{pq-p-q}{\gcd(p,q)}=\frac{\PH_{(p,q)}-1}{\gcd(p,q)}$. Fern\'andez, Mozo and Neciosup \cite{Fernandez-Mozo-Neciosup}, find an imprecision in the characterization originally proposed by Loray. These authors mention that the condition is sufficient but not necessary, as can be seen from the following example.

\begin{eje}\label{contraejemplo Loray} The foliation $\omega=\dd(y^{6}-x^{3})+axy(6x\dd y-3y \dd x)$ with $a \not\in \{-(6r+1)\zeta/r \in \mathbb{Q}_{>0}\;\; y \;\;\zeta^{3}=1\}$ has the same resolution as the foliation $\dd (y^{6}-x^{3})=0$, but the function $\Delta(x,y)=axy$ satisfies $\ord_{(6,3)}\Delta=3$, so the inequality $\ord_{(6,3)}\Delta > \frac{\PH_{(p,q)}-1}{\gcd(p,q)}$ does not hold.
\end{eje}

For $d=\gcd(p,q)$, we have
$$y^{p}-x^{q}=\displaystyle\prod_{i=1}^{d}(y^{\frac{p}{d}}-\zeta^{i}x^{\frac{q}{d}}),$$
and $\gamma_{i}(t)=(t^{\frac{p}{d}},A_{i}t^{\frac{q}{d}})$ with $A_{i}^{\frac{p}{d}}=\zeta^{i}$ is a parameterization of $\mathcal{S}_{i}:f_{i}(x,y)=(y^{\frac{p}{d}}-\zeta^{i}x^{\frac{q}{d}})$, with $\zeta \in \mathbb{C}, \; \zeta^{d}=1$. We get
$$(\Delta,y^{p}-x^{q})_{0}=\displaystyle\sum_{i}(\Delta,f_{i})_{0}=d\cdot \ord_{(p,q)}(\Delta),$$
where $(f,g)_{0}=\dim_{\mathbb{C}}\mathbb{C}\{x,y\}/(f,g)$ is the intersection number of $f$ and $g$.

\begin{lema}\label{eje separatriz4} If the cuspidal foliation $\mathcal{F}_{\omega_{p,q,\Delta}}:\omega_{p,q,\Delta}=0$ is a non-dicritical foliation, then $\mathcal{S}(\mathcal{F}_{\omega_{p,q,\Delta}})=y^{p}-x^{q}=0$  is its union of separatrices.
\end{lema}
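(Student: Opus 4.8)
The goal is to show that $y^p - x^q = 0$ is exactly the union of separatrices of the cuspidal foliation $\mathcal{F}_{\omega_{p,q,\Delta}}$, under the non-dicriticality hypothesis. The plan splits into two parts: first, verify that $y^p - x^q = 0$ is invariant by $\mathcal{F}_{\omega_{p,q,\Delta}}$ (so each of its irreducible factors $f_i = y^{p/d} - \zeta^i x^{q/d}$ is a separatrix), and second, show these are \emph{all} the separatrices.

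For the first part, I would compute $\omega_{p,q,\Delta} \wedge \dd(y^p - x^q)$ directly. Writing $h = y^p - x^q$, we have $\dd h = -q x^{q-1}\dd x + p y^{p-1}\dd y$, and
\[
\omega_{p,q,\Delta} \wedge \dd h = \dd h \wedge \dd h + \Delta\,(px\,\dd y - qy\,\dd x)\wedge \dd h = \Delta\,(px\,\dd y - qy\,\dd x)\wedge(-qx^{q-1}\dd x + py^{p-1}\dd y).
\]
Expanding the second factor gives $\Delta\bigl(p^2 x y^{p-1} - q^2 x^{q-1} y\bigr)\,\dd x\wedge\dd y = \Delta\bigl(p^2 x y^{p-1} - q^2 y x^{q-1}\bigr)\dd x \wedge \dd y$. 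One checks that $px y^{p-1} \cdot p - q y x^{q-1}\cdot q$... more carefully: $p^2 x y^{p-1} - q^2 y x^{q-1}$ is not literally a multiple of $h$, but note that $x\,\partial_x h + \tfrac{q}{p}\cdot$(something) — actually the cleaner route is to observe that the radial-type vector field $px\partial_x + qy\partial_y$ (the one dual to $px\,\dd y - qy\,\dd x$ up to sign) is quasi-homogeneous of weighted degree $0$ acting on $h$, so $(px\partial_x + qy\partial_y)(h) = pq\cdot h - pq\cdot$... Let me just say: a short computation shows $\omega_{p,q,\Delta}\wedge \dd h = \Delta \cdot c \cdot h \cdot \dd x\wedge\dd y$ is false in general; instead the right statement is that $h$ is invariant because $px\,\dd y - qy\,\dd x$ annihilates $h$ along $h = 0$ — indeed $x\,h_x \cdot (-q) \cdot$... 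The honest computation: $(px\,\dd y - qy\,\dd x)\wedge \dd h = (p x \cdot (-q x^{q-1}) - (-qy)\cdot p y^{p-1})\dd x\wedge \dd y$? No — $(px\,\dd y)\wedge(-qx^{q-1}\dd x) = -pq x^q\, \dd y\wedge\dd x = pq x^q\,\dd x\wedge\dd y$, and $(-qy\,\dd x)\wedge(py^{p-1}\dd y) = -pq y^p\,\dd x\wedge\dd y$, so the wedge equals $pq(x^q - y^p)\dd x\wedge\dd y = -pq\, h\,\dd x\wedge\dd y$. Hence $\omega_{p,q,\Delta}\wedge\dd h = -pq\,\Delta\, h\,\dd x\wedge\dd y$, confirming $h = 0$ is invariant. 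Therefore every $f_i$ is a separatrix.

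For the second part — that there are no other separatrices — I would argue as follows. The foliation is non-dicritical by hypothesis, so it has only finitely many separatrices; any separatrix not among the $f_i$ would be a branch of the multiplicity-counting obstructed by a direct index or blow-up argument. The cleanest approach: pass to the first blow-up (or the quasi-homogeneous weighted blow-up with weights $(p/d, q/d)$). The tangent cone / weighted leading form of $\omega_{p,q,\Delta}$ is governed by $\dd(y^p - x^q)$ because the perturbation term $\Delta(px\,\dd y - qy\,\dd x)$ has strictly larger weighted order (it vanishes at the origin, being multiplied by $\mathfrak m$-functions times degree-$0$ part — here one uses $\Delta(0,0)$ may be nonzero, but $px\,\dd y - qy\,\dd x$ itself has weighted degree equal to that of $\dd h$, so one must be a bit careful). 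In fact the separatrices of $\mathcal{F}_{\omega_{p,q,\Delta}}$ all have the same tangent directions as those of $y^p - x^q$, and after resolving, any extra separatrix would have to emerge transverse to the divisor at a point already occupied; tracking the invariant curves through the (weighted) blow-up, using that along $h=0$ the $1$-form degenerates exactly to $\dd h$ up to the factor $-pq\Delta$, shows no new branch can appear. The main obstacle will be handling the case $\Delta(0,0)\neq 0$ and $d = \gcd(p,q) > 1$ simultaneously: there the weighted tangent cone of $\omega$ is \emph{not} simply that of $\dd h$, and one needs to check that the extra term $-pq\Delta(0,0)\,yx\cdots$ type contributions do not spawn a separatrix transverse to all the $f_i$ — I expect this to require either an explicit index computation (Camacho–Sad or the existence theorem applied on the divisor) or invoking non-dicriticality more forcefully to rule it out.

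I would conclude by remarking that combined with the identity $(\Delta, y^p - x^q)_0 = d\cdot\ord_{(p,q)}(\Delta)$ recorded just before the lemma, this description of the separatrices is what makes Theorems~\ref{resultado2} and~\ref{resultado3} reduce to comparisons of Newton polygons via Theorem~\ref{resultado1}.
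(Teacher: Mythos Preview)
Your first part---the direct computation $\omega_{p,q,\Delta}\wedge \dd h = -pq\,\Delta\,h\,\dd x\wedge\dd y$ showing that $h=y^p-x^q=0$ is invariant---is correct and in fact more explicit than the paper, which simply asserts invariance.

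The second part, however, has a genuine gap. Your blow-up / weighted-blow-up sketch is never carried out; you yourself flag the case $\Delta(0,0)\neq 0$ with $d=\gcd(p,q)>1$ as unresolved and defer to an unspecified index computation. The paper avoids all of this with a short multiplicity comparison that you missed. Set $\alpha=\ord(\Delta)$; from the explicit form of $\omega_{p,q,\Delta}$ one reads off
\[
\mult(\omega_{p,q,\Delta})=\min\{p-1,\,q-1,\,\alpha+1\}.
\]
Assume (say) $p<q$, so the curve $y^p-x^q=0$ has multiplicity $p$. If the foliation had an additional separatrix, the total curve $\mathcal{S}(\mathcal{F}_{\omega_{p,q,\Delta}})$ would satisfy $\mult(\mathcal{S}(\mathcal{F}_{\omega_{p,q,\Delta}}))>p$. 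Now apply the Mattei--Salem inequality (Theorem~\ref{caracterizacion segundo tipo}(2)), which gives $\mult(\omega_{p,q,\Delta})\geq \mult(\mathcal{S}(\mathcal{F}_{\omega_{p,q,\Delta}}))-1>p-1$. This is impossible in either case of the minimum above: if $\mult(\omega_{p,q,\Delta})=p-1$ the contradiction is immediate, and if $\mult(\omega_{p,q,\Delta})=\alpha+1$ then necessarily $\alpha+1\leq p-1$, again a contradiction. No resolution of singularities, tangent-cone analysis, or Camacho--Sad index is needed; non-dicriticality enters only to ensure finitely many separatrices so that $\mathcal{S}(\mathcal{F}_{\omega_{p,q,\Delta}})$ and its multiplicity are defined.
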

\begin{proof}
The curve $\mathcal{S}_{f}:y^{p}-x^{q}=0$ is an invariant curve of the foliation $\mathcal{F}_{\omega_{p,q,\Delta}}$. Put $\alpha=\ord(\Delta)$. Then

\begin{equation}\label{multiplicidad de folia-Loray}
\mult(\omega_{p,q,\Delta})=\min\{q-1, p-1,\alpha + 1\}.
\end{equation}

\medskip

Suppose that $p<q$. The multiplicity of the curve  $\mathcal{S}_{f}$ is $p$. If we assume that the curve $\mathcal{S}_{f}$ is not the only separatrix of the foliation $\mathcal{F}_{\omega_{p,q,\Delta}}$, then $\mult(\mathcal{S}(\mathcal{F}_{\omega_{p,q,\Delta}}))>p$. Using \eqref{multiplicidad de folia-Loray}, we have
$\mult(\omega_{p,q,\Delta})=\min\{p-1,\alpha + 1\}.$
We will study both possibilities:
\begin{enumerate}
\item [(i)] If $\mult(\omega_{p,q,\Delta})=p-1$ then $p-1=\mult(\omega_{p,q,\Delta})\geq \mult(\mathcal{S}(\mathcal{F}_{\omega_{p,q,\Delta}}))-1>p-1$, which is a contradiction.
\item [(ii)] If $\mult(\omega_{p,q,\Delta})=\alpha+1$ then $\alpha+1=\mult(\omega_{p,q,\Delta})\geq \mult(\mathcal{S}(\mathcal{F}_{\omega_{p,q,\Delta}}))-1>p-1,$ which is a contradiction since $\alpha+1\leq p-1$.
\end{enumerate}
Therefore the union of separatrices of the foliation $\mathcal{F}_{\omega_{p,q,\Delta}}$ is $\mathcal{S}(\mathcal{F}_{\omega_{p,q,\Delta}})=y^{p}-x^{q}$. The same reasoning happens when $p\geq q$ and we conclude that $\mathcal{S}(\mathcal{F}_{\omega_{p,q,\Delta}})=y^{p}-x^{q}$.
\end{proof}

\begin{propo}\label{orden pesado-segundo tipo1}  Suppose that  the cuspidal foliation $\mathcal{F}_{\omega_{p,q,\Delta}}:\omega_{p,q,\Delta}=0$ is non-dicritical. If $(\Delta,y^{p}-x^{q})_{0} \geq \PH_{(p,q)}-1$, with $\dd=\gcd(p,q)$, then the foliation $\mathcal{F}_{\omega_{p,q,\Delta}}$ is of the second type.
\end{propo}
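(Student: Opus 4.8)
The plan is to reduce the statement to Lemma \ref{L1} via Lemma \ref{eje separatriz4} together with an explicit description of the Newton polygon of $\omega_{p,q,\Delta}$. Since $\mathcal{F}_{\omega_{p,q,\Delta}}$ is non-dicritical, Lemma \ref{eje separatriz4} tells us that a reduced equation of its union of separatrices is $f(x,y)=y^{p}-x^{q}=0$. One checks immediately that $\sop(\dd f)=\sop(f)=\{(0,p),(q,0)\}$, so $\mathcal{N}(\dd f)=\mathcal{N}(f)$ is the segment joining $(0,p)$ and $(q,0)$ (together with the two coordinate rays issuing from these vertices). Hence, by Lemma \ref{L1}, it suffices to show that $\mathcal{N}(\omega_{p,q,\Delta})=\mathcal{N}(f)$.

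Next I would compute $\sop(\omega_{p,q,\Delta})$. Writing $\Delta=\sum_{i,j}c_{ij}x^{i}y^{j}$, we have
\[
\Delta\cdot(px\dd y-qy\dd x)=\sum_{i,j}c_{ij}\bigl(-q\,x^{i}y^{j+1}\dd x+p\,x^{i+1}y^{j}\dd y\bigr),
\]
so, in the notation $\widehat{\omega}=\sum\widehat{A}_{ij}x^{i-1}y^{j}\dd x+\sum\widehat{B}_{ij}x^{i}y^{j-1}\dd y$, the $\Delta$-part contributes the coefficient pair $(\widehat{A}_{kl},\widehat{B}_{kl})=c_{k-1,l-1}(-q,p)$ at each point $(k,l)$ with $k,l\geq 1$. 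Since $p,q\neq 0$, this pair is nonzero precisely when $c_{k-1,l-1}\neq 0$, so the $\Delta$-part contributes exactly the set $\sop(\Delta)+(1,1)\subseteq\{(k,l):k\geq 1,\ l\geq 1\}$. The two support points $(q,0)$ and $(0,p)$ coming from $\dd f$ each have a vanishing coordinate, hence cannot be cancelled by the $\Delta$-part, and therefore
\[
\sop(\omega_{p,q,\Delta})=\{(q,0),(0,p)\}\ \cup\ \bigl(\sop(\Delta)+(1,1)\bigr).
\]

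Then the equality $\mathcal{N}(\omega_{p,q,\Delta})=\mathcal{N}(f)$ holds if and only if every point of $\sop(\Delta)+(1,1)$ lies in $D(\{(q,0),(0,p)\})$; as such points have nonnegative coordinates, this is equivalent to $p(i+1)+q(j+1)\geq pq$, i.e. $pi+qj\geq pq-p-q=\PH_{(p,q)}-1$, for every $(i,j)\in\sop(\Delta)$. By the weighted-degree formula $\grad_{(p,q)}(x^{i}y^{j})=(pi+qj)/d$ with $d=\gcd(p,q)$ and the identity $(\Delta,y^{p}-x^{q})_{0}=d\cdot\ord_{(p,q)}(\Delta)$ recalled just before the statement, we get $\min\{pi+qj:(i,j)\in\sop(\Delta)\}=d\cdot\ord_{(p,q)}(\Delta)=(\Delta,y^{p}-x^{q})_{0}$, so this family of inequalities is exactly the hypothesis $(\Delta,y^{p}-x^{q})_{0}\geq\PH_{(p,q)}-1$. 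Consequently $\mathcal{N}(\omega_{p,q,\Delta})=\mathcal{N}(f)$, and Lemma \ref{L1} yields that $\mathcal{F}_{\omega_{p,q,\Delta}}$ is of the second type.

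I do not expect a genuine obstacle: the only point that needs care is checking that the contribution of $\dd(y^{p}-x^{q})$ and that of $\Delta\cdot(px\dd y-qy\dd x)$ do not cancel at the extreme vertices $(q,0)$ and $(0,p)$ and thereby shrink the Newton polygon, which is immediate since the $\Delta$-term is supported in $\{i\geq 1,\ j\geq 1\}$. Everything else is the straightforward translation of the numerical hypothesis, through the intersection-number formula, into the statement that $\sop(\Delta)+(1,1)$ sits inside $D(\{(q,0),(0,p)\})$.
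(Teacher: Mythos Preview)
Your proof is correct, but it follows a different route from the paper's own argument.

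The paper proves Proposition~\ref{orden pesado-segundo tipo1} by reducing to the \emph{multiplicity} criterion of Mattei--Salem (Theorem~\ref{caracterizacion segundo tipo}): assuming without loss of generality $p<q$, it extracts from the hypothesis the inequality $i_{0}p+j_{0}q\geq\PH_{(p,q)}-1$ for some $(i_{0},j_{0})$ realizing $\ord(\Delta)$, bounds $i_{0}+j_{0}\geq p-1$ by a short chain of inequalities, and concludes via \eqref{multiplicidad de folia-Loray} that $\mult(\omega_{p,q,\Delta})=p-1=\mult(\dd f)$. You instead reduce to the \emph{Newton polygon} criterion of Lemma~\ref{L1}: you compute $\sop(\omega_{p,q,\Delta})=\{(q,0),(0,p)\}\cup(\sop(\Delta)+(1,1))$ explicitly, observe that the hypothesis is exactly the statement that $\sop(\Delta)+(1,1)\subset D(\{(q,0),(0,p)\})$, and conclude $\mathcal{N}(\omega_{p,q,\Delta})=\mathcal{N}(f)$. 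Your argument is symmetric in $p,q$ (no WLOG needed), and since it shows that the numerical hypothesis is \emph{equivalent} to $\mathcal{N}(\omega_{p,q,\Delta})=\mathcal{N}(f)$, it yields Proposition~\ref{orden pesado-segundo tipo2} simultaneously once one replaces Lemma~\ref{L1} by the full Theorem~\ref{resultado1}; indeed, the paper's proof of Proposition~\ref{orden pesado-segundo tipo2} is essentially your support computation run in the other direction. The paper's approach, on the other hand, is slightly more self-contained in that it appeals only to Theorem~\ref{caracterizacion segundo tipo} and not to the Newton-polygon machinery of Section~\ref{characterization}.
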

\begin{proof}
Suppose without lost of generality  that $p<q$ and $\ord \Delta=i_{0}+j_{0}.$
Since $(\Delta,y^{p}-x^{q})_{0} \geq \PH_{(p,q)}-1$ we have
$i_{0}p+j_{0}q \geq \PH_{(p,q)}-1$. After $p<q$ we get
\[
i_{0}q+j_{0}q>i_{0}p+j_{0}q \geq \PH_{(p,q)}-1,
\]
so $i_{0}+j_{0}>p-1-\frac{p}{q}>p-2$ and $\alpha=\ord \Delta \geq p-1$.
Since $\mult(\dd f)=p-1$ for $\mathcal{S}(\mathcal{F}_{\omega_{p,q,\Delta}}):f(x,y)=y^{p}-x^{q}=0$, using (\ref{multiplicidad de folia-Loray}) we have $\mult (\omega_{p,q,\Delta})=p-1$. Hence $\mult (\omega_{p,q,\Delta})=\mult(\dd f)$ and we conclude that the foliation $\mathcal{F}_{\omega_{p,q,\Delta}}$  is of the second type.
\end{proof}

\begin{propo}\label{orden pesado-segundo tipo2} Suppose that  the cuspidal foliation $\mathcal{F}_{\omega_{p,q,\Delta}}:\omega_{p,q,\Delta}=0$ is non-dicritical. If $\mathcal{F}_{\omega_{p,q,\Delta}}$
is the second type, then $(\Delta,y^{p}-x^{q})_{0} \geq \PH_{(p,q)}-1.$
\end{propo}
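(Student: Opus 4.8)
The plan is to combine an explicit description of $\sop(\omega_{p,q,\Delta})$ with the Newton polygon characterization of second type foliations already established in Corollary \ref{poligonoigual222} (equivalently Theorem \ref{resultado1}). First I would fix the union of separatrices: by Lemma \ref{eje separatriz4} it is the cusp $\mathcal{S}_f\colon f(x,y)=y^{p}-x^{q}=0$. Since $\dd f=-qx^{q-1}\dd x+py^{p-1}\dd y$, we have $\sop(\dd f)=\{(q,0),(0,p)\}$, so $\mathcal{N}(\dd f)$ has a single compact edge, joining $(q,0)$ and $(0,p)$ and lying on the line $pX+qY=pq$; accordingly a point $(a,b)\in\mathbb{R}^{2}_{\geq 0}$ belongs to the region it bounds precisely when $pa+qb\geq pq$.

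Next I would compute the support of $\omega_{p,q,\Delta}$. Expanding (\ref{cusp}) gives $\omega_{p,q,\Delta}=-q\bigl(x^{q-1}+y\Delta\bigr)\dd x+p\bigl(y^{p-1}+x\Delta\bigr)\dd y$. Writing $\Delta=\sum_{i,j}a_{ij}x^{i}y^{j}$, the monomials coming from $y\Delta\,\dd x$ and from $x\Delta\,\dd y$ contribute to the $(i+1,j+1)$-component of $\omega_{p,q,\Delta}$ the coefficient pair $(-qa_{ij},\,pa_{ij})$, which is nonzero whenever $a_{ij}\neq 0$; moreover these monomials have supports contained in $\{j\geq 1\}$ and $\{i\geq 1\}$ respectively, hence never reach the points $(q,0)$ or $(0,p)$ coming from $x^{q-1}\dd x$ and $y^{p-1}\dd y$. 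Therefore no cancellation occurs and
\[
\sop(\omega_{p,q,\Delta})=\{(q,0),(0,p)\}\cup\bigl(\sop(\Delta)+(1,1)\bigr).
\]

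Now I would invoke the hypothesis. Since $\mathcal{F}_{\omega_{p,q,\Delta}}$ is of the second type, Corollary \ref{poligonoigual222} yields $\mathcal{N}(\omega_{p,q,\Delta})=\mathcal{N}(\dd f)$; as these polygons bound the same region and $\{(q,0),(0,p)\}\subseteq\sop(\omega_{p,q,\Delta})$, every point of $\sop(\Delta)+(1,1)$ must lie on or above the compact edge of $\mathcal{N}(\dd f)$, that is $p(i+1)+q(j+1)\geq pq$, i.e.\ $pi+qj\geq pq-p-q$, for every $(i,j)\in\sop(\Delta)$. Taking the minimum over $\sop(\Delta)$ and using that $d\cdot\ord_{(p,q)}(\Delta)=\min\{pi+qj:a_{ij}\neq 0\}$ with $d=\gcd(p,q)$, we get $d\cdot\ord_{(p,q)}(\Delta)\geq pq-p-q=\PH_{(p,q)}-1$. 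Finally, by the identity $(\Delta,y^{p}-x^{q})_{0}=d\cdot\ord_{(p,q)}(\Delta)$ recorded before Lemma \ref{eje separatriz4}, we conclude $(\Delta,y^{p}-x^{q})_{0}\geq\PH_{(p,q)}-1$, as claimed. Equivalently, one may argue by contraposition: if $(\Delta,y^{p}-x^{q})_{0}<\PH_{(p,q)}-1$, then some monomial of $\Delta$ produces a point of $\sop(\omega_{p,q,\Delta})$ strictly below the compact edge of $\mathcal{N}(\dd f)$, whence $\mathcal{N}(\omega_{p,q,\Delta})\neq\mathcal{N}(\dd f)$ and Theorem \ref{resultado1} shows $\mathcal{F}_{\omega_{p,q,\Delta}}$ is not of the second type.

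The only step requiring real care is the support computation: one must verify that the $\Delta$-part of $\omega_{p,q,\Delta}$ neither cancels against $x^{q-1}\dd x$ and $y^{p-1}\dd y$ nor destroys the vertices $(q,0)$ and $(0,p)$. As indicated above this is immediate, from the bidegree constraints $j\geq 1$ and $i\geq 1$ on the relevant monomials and from the shape $(-qa_{ij},pa_{ij})$ of the coefficient pairs. Everything else is a routine translation between membership in the Newton region of $\dd f$, the weighted order of $\Delta$, and the intersection number $(\Delta,y^{p}-x^{q})_{0}$.
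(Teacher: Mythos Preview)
Your proof is correct and follows essentially the same approach as the paper: identify the separatrix via Lemma \ref{eje separatriz4}, invoke Theorem \ref{resultado1} (equivalently Corollary \ref{poligonoigual222}) to force $\mathcal{N}(\omega_{p,q,\Delta})=\mathcal{N}(\dd f)$, compute $\sop(\omega_{p,q,\Delta})=\{(q,0),(0,p)\}\cup(\sop(\Delta)+(1,1))$, and read off the inequality $pi+qj\geq pq-p-q$ on each $(i,j)\in\sop(\Delta)$. Your treatment is in fact slightly more careful than the paper's, since you explicitly rule out cancellations in the support computation and make the passage from the pointwise inequality to $(\Delta,y^{p}-x^{q})_{0}=d\cdot\ord_{(p,q)}(\Delta)\geq\PH_{(p,q)}-1$ fully precise.
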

\begin{proof} From Lemma \ref{eje separatriz4} we get $\mathcal{S}(\mathcal{F}_{\omega_{p,q,\Delta}})=y^{p}-x^{q}$. Put $d:=\gcd(p,q)$. The line containing the only compact side of Newton polygone $\mathcal{N}(\dd f)$ is $\mathcal{L}:\frac{p}{d}i+\frac{q}{d}j=\frac{pq}{d}$. Since $\mathcal{F}_{\omega_{p,q,\Delta}}$ is of the second type, using Theorem \ref{resultado1} we have  $\mathcal{N}(\omega_{p,q,\Delta})=\mathcal{N}(f)$. Therefore, the line $\mathcal{L}$ also contains the only compact side of the Newton polygon of $\mathcal{N}(\omega_{p,q,\Delta})$, that is any $(a,b) \in \sop(\omega_{p,q,\Delta})$ verifies $a\frac{p}{d}+b\frac{q}{d} \geq \frac{pq}{d}$. Suppose that
$\Delta(x,y)=\displaystyle\sum_{i,j}a_{ij}x^{i}y^{j} \in \mathbb{C}\{x,y\}$, then

\[
\omega_{p,q,\Delta}=\left(-qx^{q-1}-q\displaystyle\sum_{i,j}a_{ij}x^{i}y^{j+1}\right)\dd x+
\left(py^{p-1}+p\displaystyle\sum_{i,j}a_{ij}x^{i+1}y^{j}\right)\dd y,
\]
and $\sop(\omega_{p,q,\Delta})=\{(q,0),(i+1,j+1)\}\cup \{(0,p)(i+1,j+1)\},$ for $(i,j) \in \sop(\Delta)$. If $(i+1,j+1) \in \sop(\omega_{p,q,\Delta})$ then $(i+1)\frac{p}{d}+(j+1)\frac{q}{d} \geq \frac{pq}{d}$, so we conclude  that $(\Delta,y^{p}-x^{q})_{0} =ip+jq \geq \PH_{(p,q)}-1$.
\end{proof}

\begin{propo} \label{segundo tipo misma resolucion}  Suppose that  the cuspidal foliation $\mathcal{F}_{\omega_{p,q,\Delta}}:\omega_{p,q,\Delta}=0$ is non-dicritical. The foliation $\mathcal{F}_{\omega_{p,q,\Delta}}$ has the same reduction of singularities that $\dd (y^p-x^q)$, if and only if, $(\Delta,y^{p}-x^{q})_{0} \geq \PH_{(p,q)}-1.$
\end{propo}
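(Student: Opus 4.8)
This is the equivalence $(b)\Leftrightarrow(c)$ of Theorem \ref{resultado2}. Since Propositions \ref{orden pesado-segundo tipo1} and \ref{orden pesado-segundo tipo2} together establish the equivalence $(a)\Leftrightarrow(b)$ there — i.e. that $\mathcal{F}_{\omega_{p,q,\Delta}}$ is of the second type if and only if $(\Delta,y^{p}-x^{q})_{0}\geq\PH_{(p,q)}-1$ — it is enough to prove that $\mathcal{F}_{\omega_{p,q,\Delta}}$ is of the second type if and only if it has the same reduction of singularities as $\dd(y^{p}-x^{q})$; I would treat the two implications separately.

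For the direct implication I would suppose $\mathcal{F}_{\omega_{p,q,\Delta}}$ is of the second type. By Lemma \ref{eje separatriz4} its union of separatrices is $\{y^{p}-x^{q}=0\}$, so by item (1) of Theorem \ref{caracterizacion segundo tipo} the minimal reduction of singularities of $\mathcal{F}_{\omega_{p,q,\Delta}}$ coincides with the minimal reduction of the curve $\{y^{p}-x^{q}=0\}$. On the other hand $\dd(y^{p}-x^{q})$, being Hamiltonian, is a generalized curve foliation — its reduction is the standard resolution of the (reduced) quasi-homogeneous singularity $y^{p}-x^{q}$, in which no saddle-node appears — with the same union of separatrices $\{y^{p}-x^{q}=0\}$; hence, by Theorem \ref{misma resolucion111}, its reduction of singularities also coincides with that of $\{y^{p}-x^{q}=0\}$. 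Comparing the two, $\mathcal{F}_{\omega_{p,q,\Delta}}$ and $\dd(y^{p}-x^{q})$ have the same reduction of singularities. (Together with Proposition \ref{orden pesado-segundo tipo1} this already yields the implication $(b)\Rightarrow(c)$.)

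For the converse I would argue by contraposition. If $\mathcal{F}_{\omega_{p,q,\Delta}}$ is not of the second type, then Proposition \ref{orden pesado-segundo tipo2} gives $(\Delta,y^{p}-x^{q})_{0}<\PH_{(p,q)}-1$, i.e. $d\cdot\ord_{(p,q)}(\Delta)<pq-p-q$ with $d=\gcd(p,q)$. Writing $\Delta(x,y)=\sum a_{ij}x^{i}y^{j}$ and using the description $\sop(\omega_{p,q,\Delta})=\{(q,0),(0,p)\}\cup\{(i+1,j+1):(i,j)\in\sop(\Delta)\}$ from the proof of Proposition \ref{orden pesado-segundo tipo2}, this inequality says that $\sop(\omega_{p,q,\Delta})$ contains a point lying strictly below the line $\mathcal{L}:\frac{p}{d}i+\frac{q}{d}j=\frac{pq}{d}$ which carries the unique compact side of $\mathcal{N}(\dd(y^{p}-x^{q}))$. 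I would then follow the standard (toric) resolution $\pi_{0}$ of $\dd(y^{p}-x^{q})$ and track simultaneously the strict transforms of $\omega_{p,q,\Delta}$ and of $\dd(y^{p}-x^{q})$, applying Theorem \ref{resultado1} to the successive strict transforms, to show that the extra support point below $\mathcal{L}$ persists and forces the strict transform of $\omega_{p,q,\Delta}$ to remain non-reduced after $\pi_{0}$; consequently the minimal reduction of $\mathcal{F}_{\omega_{p,q,\Delta}}$ requires strictly more blow-ups than $\pi_{0}$ and cannot coincide with that of $\dd(y^{p}-x^{q})$, which is the implication $(c)\Rightarrow(b)$.

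The delicate point is precisely this last step: in general the reduction of a foliation may coincide with that of its union of separatrices without the foliation being of the second type (Example \ref{contra1}), so the argument must genuinely exploit that here the separatrix curve $\{y^{p}-x^{q}=0\}$ is a union of equisingular cuspidal branches whose embedded resolution is a single satellite chain, in order to ensure that a perturbation $\Delta$ with $\ord_{(p,q)}(\Delta)$ too small produces an obstruction along that chain rather than a harmless transverse saddle-node at a terminal point. A more economical alternative, which I would attempt first, is to read the threshold directly off the explicit description of the resolution of cuspidal foliations due to Loray, with the correction of Fern\'andez, Mozo and Neciosup recalled before Example \ref{contraejemplo Loray}, which pins down that $\mathcal{F}_{\omega_{p,q,\Delta}}$ and $\dd(y^{p}-x^{q})$ share their resolution precisely when $\ord_{(p,q)}(\Delta)\geq\frac{pq-p-q}{d}$.
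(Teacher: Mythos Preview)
Your forward implication is correct and coincides with the paper's route: once $\mathcal{F}_{\omega_{p,q,\Delta}}$ is of the second type, Lemma~\ref{eje separatriz4} and Theorem~\ref{caracterizacion segundo tipo}(1) identify its minimal resolution with that of the curve $y^p-x^q=0$, which is also the resolution of the Hamiltonian foliation $\dd(y^p-x^q)$.

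The converse, however, has a genuine gap. Your plan to ``apply Theorem~\ref{resultado1} to the successive strict transforms'' and conclude that a support point below $\mathcal{L}$ forces non-reducedness after $\pi_0$ is not an argument: Theorem~\ref{resultado1} relates Newton polygons to the second-type property, not to whether a prescribed sequence of blow-ups reduces the foliation, and as you yourself observe via Example~\ref{contra1}, coincidence of resolutions does not in general imply second type. Your fallback---to ``read the threshold directly off'' Loray with the Fern\'andez--Mozo--Neciosup correction---is circular: what those authors establish (see the paragraph preceding Example~\ref{contraejemplo Loray}) is only that Loray's \emph{strict} inequality is sufficient but not necessary; they do not supply the sharp $\geq$ threshold, which is precisely the content of the proposition you are trying to prove.

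The paper closes the gap by actually carrying out the toric computation you gesture toward. It pulls $\omega_{p,q,\Delta}$ back through the single chart $E:(x,y)=(u^{n}v^{p/d},u^{m}v^{q/d})$ with $mp-nq=d$ that resolves $y^p-x^q$, obtains the strict transform
\[
v\bigl(-nq+mpu^{d}+\widetilde{\Delta}(u,v)\bigr)\,\dd u+\tfrac{pq}{d}\,u(u^{d}-1)\,\dd v,
\qquad
\widetilde{\Delta}=\sum_{i,j} d\,a_{ij}\,u^{ni+mj+m+n-nq}\,v^{(pi+qj+p+q-pq)/d},
\]
and linearises at the singular points $u=0$ and $u^{d}=1$ on $\{v=0\}$. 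Requiring these singularities to be reduced forces $\ord_{v}\widetilde{\Delta}\geq 0$, i.e.\ $pi+qj\geq pq-p-q$ for the relevant $(i,j)\in\sop(\Delta)$, which is exactly $(\Delta,y^{p}-x^{q})_{0}\geq\PH_{(p,q)}-1$. This explicit analysis is what your outline promises but does not deliver.
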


\begin{proof} Suppose that $(\Delta,y^{p}-x^{q})_{0} \geq \PH_{(p,q)}-1$. By Proposition \ref{orden pesado-segundo tipo1}  the foliation $\mathcal{F}_{\omega_{p,q,\Delta}}$ is of the second type and by Theorem \ref{caracterizacion segundo tipo} we conclude that $\mathcal{F}_{\omega_{p,q,\Delta}}$ and $\dd (y^p-x^q)$ have the same reduction of singularities.

Suppose now that $\mathcal{F}_{\omega_{p,q,\Delta}}$ and $\dd (y^p-x^q)$ have the same reduction of singularities. The curve $y^{p}-x^{q}=0$ with $p > q$ and $d=\gcd(p,q)$ is desingularized by
\begin{equation}\label{resolucion torica17}
E:(x,y)=(u^{n}v^{\frac{p}{d}},u^{m}v^{\frac{q}{d}}),
\end{equation}
such that $mp-nq=d$  and $m, n \in \mathbb{N}^{*}$. The transformation of
\[
\omega_{p,q,\Delta}=(-qx^{q-1}-qy\Delta(x,y))\dd x+(py^{p-1}+px\Delta(x,y))\dd y,
\]
by $E$ defined as \eqref{resolucion torica17} is

\begin{eqnarray}\label{foliacion-resolucion torica211}
E^{*}\omega_{p,q,\Delta} & = &
 \left [ u^{nq-1}v^{\frac{pq}{d}}\left(-nq+mpu^{mp-nq}\right)+dv^{\frac{pq}{d}}u^{nq-1}(u^{m+n-nq}v^{\frac{p+q-pq}{d}}E^{*}(\Delta(x,y)))\right ]\dd u \nonumber \\
   &+&\left [ \frac{pq}{d}u^{nq}v^{\frac{pq}{d}-1}(-1+u^{mp-nq})\right ] \dd v
   \nonumber\\
   &=&\left( u^{nq-1}v^{\frac{pq}{d}-1}\right) v (-qn+mp u^{d}+\widetilde{\Delta}
   (u,v) )\dd u +\frac{pq}{d} u (u^{d}-1) \dd v,
\end{eqnarray}
where
\[
\begin{array}{lll}
\widetilde{\Delta}(u,v))&=& d E^{\ast}(\Delta(x,y))u^{m+n-nq}v^{\frac{p+q-pq}{d}}\\
&=&\displaystyle\sum_{i,j} d a_{ij} u^{ni+mj+m+n-nq}v^{\frac{pi+qj+p+q-pq}{d}}.
\end{array}
\]

Hence
\begin{equation}
\label{eeee}
\begin{array}{lll}
\dfrac{E^{*}\omega_{p,q,\Delta} }{u^{nq-1}v^{pq-1}} &=& v(-qn+mp u^{d}+\widetilde{\Delta}(u,v))\dd u+
   \frac{pq}{d} u (u^{d}-1)]\dd v,
\end{array}
\end{equation}

which singularities  are $(0,0)$ and $(\zeta^{j},0)$, where  $\zeta$  is a $d$th-primitive root of the unity. The dual vector field associated with the foliation defined by \eqref{eeee} is
\[
X=\frac{pq}{d}(u^{d}-1)u \frac{\partial}{\partial u} - v(-nq+mp u^{d}+\widetilde{\Delta}(u,v))\frac{\partial}{\partial v},
\]
and the matrix associated with this field is
\[
DX=\left(
    \begin{array}{cc}
      -\frac{pq}{d}+\frac{(d+1)pq}{d}u^{d} & 0 \\
       \ast & nq-mpu^{d}-\widetilde{\Delta}(u,v)-v\frac{\partial \widetilde{\Delta}(u,v)}{\partial v} \\
    \end{array}
  \right).
  \]

\begin{enumerate}
\item In $(0,0)$ we have
$DX=\left(
    \begin{array}{cc}
      -\frac{pq}{d} & 0 \\
       \ast & nq \\
    \end{array}
  \right).$ Therefore, the singularity $(0,0)$ is not degenerate.
\item If $u^{d}=1$ and $v=0$ then we get
$DX=\left(
    \begin{array}{ccc}
      -\frac{pq}{d} & 0 \\
       \ast & -d-\widetilde{\Delta}(u,v)\\
    \end{array}
  \right).$ Since the foliation is reduced, it could happen that
\begin{itemize} \label{orden pesadov}
\item $-d-\widetilde{\Delta}(\zeta^{j},0)=0$, from where $\widetilde{\Delta}(\zeta^{j},0)=-d$, in which case the singularity is of saddle-node type.
\item $-d-\widetilde{\Delta}(\zeta^{j},0)=-a,$ so that $\lambda=\frac{pq}{-a} \not \in \mathbb{Q}^{+}$, in this case, the singularity is of a no degenerate type.
\end{itemize}
We conclude that $\ord_{v}\widetilde{\Delta}\geq 0$, so $pi+qj+p+q \geq 0$ for some $(i,j)$. Therefore $(\Delta,y^{p}-x^{q})_{0} \geq \PH_{(p,q)}-1$ for some $(i,j) \in \sop(\Delta)$.
\end{enumerate}
\end{proof}

{\bf Proof of Theorem \ref{resultado2}.} The equivalence of statements $(a)$ and $(b)$ is a direct consequence of  Propositions \ref{orden pesado-segundo tipo1} and \ref{orden pesado-segundo tipo2}. The equivalence of statements $(b)$ and $(c)$ is   Proposition \ref{segundo tipo misma resolucion}.
\fdp

\begin{coro}\label{corocurvagenera}
Suppose that  the cuspidal foliation $\mathcal{F}_{\omega_{p,q,\Delta}}:\omega_{p,q,\Delta}=0$ is non-dicritical.
 If the foliation $\mathcal{F}_{\omega_{p,q,\Delta}}:\omega_{p,q,\Delta}=0$ is of the generalized curve type then $(\Delta,y^{p}-x^{q})_{0} \geq \PH_{(p,q)}-1$.
\end{coro}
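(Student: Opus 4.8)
The plan is to derive Corollary \ref{corocurvagenera} as an immediate consequence of the already established equivalence in Theorem \ref{resultado2}, using the standard fact that every generalized curve foliation is of the second type. First I would recall that a generalized curve foliation has, by definition, no saddle-nodes in its reduction of singularities; in particular no tangent node can appear, so a non-dicritical generalized curve foliation is automatically of the second type with respect to its exceptional divisor. Applying this to the cuspidal foliation $\mathcal{F}_{\omega_{p,q,\Delta}}$, which is assumed non-dicritical, we conclude that $\mathcal{F}_{\omega_{p,q,\Delta}}$ is of the second type.

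Then I would invoke the equivalence of statements $(a)$ and $(b)$ in Theorem \ref{resultado2} (equivalently, Proposition \ref{orden pesado-segundo tipo2}): since $\mathcal{F}_{\omega_{p,q,\Delta}}$ is of the second type, the intersection number satisfies $(\Delta, y^p - x^q)_0 \geq \PH_{(p,q)} - 1$. This is precisely the desired inequality, so the proof is complete. In short, the chain of implications is: generalized curve $\Rightarrow$ second type $\Rightarrow$ $(\Delta, y^p - x^q)_0 \geq \PH_{(p,q)} - 1$, where the first implication is the definitional observation about the absence of saddle-nodes (hence no tangent nodes) and the second is Proposition \ref{orden pesado-segundo tipo2}.

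There is essentially no obstacle here; the only point requiring a word of care is the first implication, and I would phrase it by noting that the class of non-dicritical generalized curve foliations is contained in the class of non-dicritical second type foliations, as already remarked in the introduction (a second type foliation allows saddle-nodes in its resolution subject to restrictions, while a generalized curve forbids them altogether). One could alternatively route through Corollary \ref{poligonoigual222} and Proposition \ref{poligono Newton iguales}: a non-dicritical generalized curve foliation has $\mathcal{N}(\omega_{p,q,\Delta}) = \mathcal{N}(\dd(y^p - x^q))$, hence by Theorem \ref{resultado1} it is of the second type, and then Proposition \ref{orden pesado-segundo tipo2} applies. Either way the argument is one line once the containment of classes is made explicit.
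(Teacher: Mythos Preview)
Your proposal is correct and follows essentially the same route as the paper: the corollary is stated immediately after Theorem \ref{resultado2} with no separate proof, so it is meant to follow directly from the implication ``generalized curve $\Rightarrow$ second type'' together with the equivalence $(a)\Leftrightarrow(b)$ of that theorem (i.e.\ Proposition \ref{orden pesado-segundo tipo2}). Your alternative route via Theorem \ref{misma resolucion111} and statement $(c)$ of Theorem \ref{resultado2} would work equally well.
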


\medskip

 The fact that the foliation $\mathcal{F}_{\omega_{p,q,\Delta}}$ is of generalized curve type does not imply that $(\Delta,y^{p}-x^{q})_{0} > \PH_{(p,q)}-1$, as the next example shows:

\medskip

\begin{eje}\label{sisepuede}
The foliation
$$
\omega=d(y^6-x^3)+axy(6xdy-3ydx),
$$
with $a \in \{-(6r+1)\zeta/r \in \mathbb{Q}_{> 0}\;\; y \;\;\zeta^{3}=1\} \subseteq \mathbb{C}^{\ast},$
and $a^{3}\neq -1$ is of the generalized curve type, but $(\Delta,y^{p}-x^{q})_{0}=3=\PH_{(p,q)}-1$, where $p=6, q=3$ and $d=3$.
\end{eje}

\medskip

Nevertheless

\begin{propo}\label{curva generalizada 2} Suppose that  the cuspidal foliation $\mathcal{F}_{\omega_{p,q,\Delta}}:\omega_{p,q,\Delta}=0$ is non-dicritical and $p$ and $q$ are coprimes. The foliation $\mathcal{F}_{\omega_{p,q,\Delta}}$ is of generalized curve type, if and only if $(\Delta,y^{p}-x^{q})_{0} >\PH_{(p,q)}-1$.
\end{propo}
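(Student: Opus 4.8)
The plan is to combine Corollary \ref{corocurvagenera} with an elementary remark on the numerical semigroup $\langle p,q\rangle$, and to settle the remaining implication by a direct computation of the G\'omez-Mont--Seade--Verjovsky index through Theorem \ref{GSV3}. We may assume $p,q\geq 2$, since if $\min(p,q)=1$ the curve $y^{p}-x^{q}=0$ is smooth, $\mathcal{F}_{\omega_{p,q,\Delta}}$ is regular at the origin, and the equivalence is trivial. As $p$ and $q$ are coprime, $d=\gcd(p,q)=1$, so by the identity preceding Lemma \ref{eje separatriz4} one has $(\Delta,y^{p}-x^{q})_{0}=\ord_{(p,q)}(\Delta)=\min\{ip+jq:(i,j)\in\sop(\Delta)\}$; this minimum is attained at some $(i,j)\in\sop(\Delta)$, hence belongs to $\langle p,q\rangle$. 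By Sylvester's theorem, $\PH_{(p,q)}-1=pq-p-q$ is the Frobenius number of $\langle p,q\rangle$, i.e.\ it is \emph{not} in $\langle p,q\rangle$ whereas every larger integer is. Therefore $(\Delta,y^{p}-x^{q})_{0}\neq\PH_{(p,q)}-1$, so for coprime $p,q$ the conditions $(\Delta,y^{p}-x^{q})_{0}\geq\PH_{(p,q)}-1$ and $(\Delta,y^{p}-x^{q})_{0}>\PH_{(p,q)}-1$ are equivalent. This equivalence is exactly what fails for non-coprime $p,q$, cf.\ Example \ref{sisepuede}.

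For the implication ``$\mathcal{F}_{\omega_{p,q,\Delta}}$ of generalized curve type $\Rightarrow (\Delta,y^{p}-x^{q})_{0}>\PH_{(p,q)}-1$'', Corollary \ref{corocurvagenera} gives $(\Delta,y^{p}-x^{q})_{0}\geq\PH_{(p,q)}-1$, and the remark above upgrades this to a strict inequality.

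For the converse, assume $(\Delta,y^{p}-x^{q})_{0}>\PH_{(p,q)}-1$. By Lemma \ref{eje separatriz4} the union of separatrices of $\mathcal{F}_{\omega_{p,q,\Delta}}$ is the irreducible curve $\mathcal{S}_{f}:f=y^{p}-x^{q}=0$, which we parametrize by $\gamma(t)=(t^{p},t^{q})$. Writing $\omega_{p,q,\Delta}=A\,\dd x+B\,\dd y$, so that $B=py^{p-1}+px\,\Delta(x,y)$ and $f_{y}=py^{p-1}$, a substitution gives
\[
\frac{B}{f_{y}}(\gamma(t))=1+t^{\,p+q-pq}\,\Delta(t^{p},t^{q}),
\]
and $\ord_{t}\bigl(t^{\,p+q-pq}\,\Delta(t^{p},t^{q})\bigr)=(\Delta,y^{p}-x^{q})_{0}-(pq-p-q)>0$ by hypothesis. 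Hence $\ord_{t}\bigl(\tfrac{B}{f_{y}}(\gamma(t))\bigr)=0$, that is $GSV(\mathcal{F}_{\omega_{p,q,\Delta}},\mathcal{S}(\mathcal{F}_{\omega_{p,q,\Delta}}))=0$, and Theorem \ref{GSV3} shows that $\mathcal{F}_{\omega_{p,q,\Delta}}$ is of generalized curve type.

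The proof is mostly a reassembling of results already at hand; the only genuinely new ingredient is the Frobenius-number remark, which discards the borderline value $(\Delta,y^{p}-x^{q})_{0}=\PH_{(p,q)}-1$ that does occur when $p,q$ are not coprime. The one step that needs attention is the closed form for $\tfrac{B}{f_{y}}(\gamma(t))$, which rests on $p-q(p-1)=p+q-pq$ and $\ord_{t}\Delta(t^{p},t^{q})=\ord_{(p,q)}(\Delta)$, both immediate. Alternatively, the converse is the coprime case of Theorem \ref{resultado3}$(a)$, or it can be read off the toric computation behind the proof of Proposition \ref{segundo tipo misma resolucion}: the strict inequality forces $\widetilde{\Delta}(u,0)\equiv 0$ in \eqref{eeee}, so at the singularity $(1,0)$ of the strict transform the eigenvalue $-d-\widetilde{\Delta}(1,0)=-1$ is nonzero, and that singularity — like the one at $(0,0)$ — is non-degenerate, leaving no saddle-node in the reduction of $\mathcal{F}_{\omega_{p,q,\Delta}}$.
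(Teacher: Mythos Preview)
Your proof is correct and follows essentially the same route as the paper: the core is the computation of $GSV(\mathcal{F}_{\omega_{p,q,\Delta}},\mathcal{S})=\ord_{t}\bigl(1+t^{p+q-pq}\Delta(t^{p},t^{q})\bigr)$ together with Theorem~\ref{GSV3}. The paper uses this single computation for both implications, while you route the forward implication through Corollary~\ref{corocurvagenera} instead; this is a cosmetic difference. Your explicit Frobenius-number remark is actually a point the paper passes over in silence: the paper asserts that $\ord_{t}\bigl(1+t^{p+q-pq}\Delta(t^{p},t^{q})\bigr)=0$ is \emph{equivalent} to $(\Delta,y^{p}-x^{q})_{0}>\PH_{(p,q)}-1$, but this equivalence requires knowing that the borderline value $(\Delta,y^{p}-x^{q})_{0}=pq-p-q$ cannot occur --- precisely what Sylvester's theorem gives, and what fails when $\gcd(p,q)>1$ (Example~\ref{sisepuede}). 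So your argument is, if anything, a tightening of the paper's.
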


\begin{proof}
Let us consider $\omega_{p,q,\Delta}=(-qx^{q-1}-qy \Delta)\dd x+(py^{p-1}+px \Delta)\dd y$, $f(x,y)=y^{p}-x^{q}$ and $\gamma(t)=(t^{p},t^{q})$ a parameterization of $f(x,y)=0$. Thus
\begin{equation}\label{GSV-curva}
\begin{array}{lll}
GSV(\mathcal{F}_{\omega_{p,q,\Delta}},\mathcal{S}(\mathcal{F}_{\omega_{p,q,\Delta}}))&=& \ord_{t}\left(\frac{py^{p-1}+px \Delta}{py^{p-1}}(t^{p},t^{q})\right)
=\ord_{t}\left(1+\frac{t^{p}\Delta(t^{p},t^{q})}{t^{q(p-1)}}\right),
\end{array}
\end{equation}
where  $\Delta(t^{p},t^{q})=\displaystyle\sum_{ij}a_{ij}t^{pi+qj}$. Note that
\[
GSV(\mathcal{F}_{\omega_{p,q,\Delta}},\mathcal{S}(\mathcal{F}_{\omega_{p,q,\Delta}}))=0,\; \mbox{ if and only if} \; \ord_{t}\left(1+\frac{t^{p}\Delta(t^{p},t^{q})}{t^{q(p-1)}}\right)=0,
\]
what is equivalent to  $(\Delta,y^{p}-x^{q})_{0} > \PH_{(p,q)}-1$. From Theorem \ref{GSV3} we conclude that $\mathcal{F}_{\omega_{p,q,\Delta}}$ is of the generalized curve type, if and only if $(\Delta,y^{p}-x^{q})_{0} > \PH_{(p,q)}-1$.
\end{proof}

\medskip

Suppose now that $p$ and $q$ are not coprime. We will analyze if the strict inequality $(\Delta,y^{p}-x^{q})_{0} > \PH_{(p,q)}-1$ is a sufficient condition for $\mathcal{F}_{\omega_{p,q,\Delta}}$ to be a foliation is of generalized curve type. We begin studying what happens when $d=\gcd(p,q)=2$.

\medskip

Let us consider $\mathcal{S}:f=f_{1}f_{2}$ and
$
g\omega = h\dd(f_{1}f_{2})+f_{1}f_{2}\eta.
$
For $\mathcal{S}_{i}:f_{i}(x,y)=0$, we have
$
\begin{array}{lll}
GSV(\mathcal{F},\mathcal{S}_{1})= \frac{1}{2\pi i} \displaystyle\int_{\partial \mathcal{S}_{1}}\frac{\dd(\frac{h}{g})}{\frac{h}{g}}+(f_{2},f_{1})_{0}.
\end{array}
$
Analogously, $GSV(\mathcal{F},\mathcal{S}_{2})=\frac{1}{2\pi i} \displaystyle\int_{\partial \mathcal{S}_{2}}\frac{\dd(\frac{h}{g})}{\frac{h}{g}}+(f_{1},f_{2})_{0}.$ We have
$$\frac{1}{2\pi i} \displaystyle\int_{\partial \mathcal{S}_{1} \cup \partial \mathcal{S}_{2}}\frac{\dd(\frac{h}{g})}{\frac{h}{g}}=GSV(\mathcal{F},\mathcal{S}_{1})+GSV(\mathcal{F},\mathcal{S}_{2})-2(f_{1},f_{2})_{0}.$$
Therefore (see \cite[page 532]{Brunella1}),
\begin{equation}\label{GSV-varias ramas}
GSV(\mathcal{F},\mathcal{S})=GSV(\mathcal{F},\mathcal{S}_{1})+GSV(\mathcal{F},\mathcal{S}_{2})-2(f_{1},f_{2})_{0}.
\end{equation}

For $d=2=\gcd(p,q)$, we have
$$y^{p}-x^{q}=\displaystyle\prod_{i=1}^{2}(y^{\frac{p}{2}}-\zeta^{i}x^{\frac{q}{2}}), \;\mbox{ with }\; \zeta^{2}=1.$$
Let $\mathcal{S}_{i}:f_{i}(x,y)=(y^{\frac{p}{2}}-\zeta^{i}x^{\frac{q}{2}})$ and $\gamma_{i}(t)=(t^{\frac{p}{2}},A_{i}t^{\frac{q}{2}})$ with $A_{i}^{\frac{p}{2}}=\zeta^{i}$ a parameterization of $\mathcal{S}_{i}$. Then

\begin{equation}
\begin{array}{lll}\label{indice sep}
(f_{1},f_{2})_{0}= \ord_{t}(f_{1}(\gamma_{2}(t)))
= \ord_{t}(t^{\frac{pq}{4}}(1-\zeta))
= \frac{pq}{4}.
\end{array}
\end{equation}

Remember that $\omega_{p,q,\Delta}=(-qx^{q-1}-qy \Delta)\dd x+(py^{p-1}+px \Delta)\dd y$, thus
\begin{equation}
\begin{array}{lll}\label{GSV coprimo}
GSV(\mathcal{F},\mathcal{S}_{1}) = \ord_{t}\left(t^{\frac{pq}{4}}(\zeta -1)+\frac{(\zeta -1)}{A_{1}^{p-1}}t^{\frac{p}{2}+\frac{q}{2}-\frac{pq}{4}}\Delta(t^{\frac{p}{2}},A_{1}t^{\frac{q}{2}})\right).
\end{array}
\end{equation}

If we consider $(\Delta,y^{p}-x^{q})_{0} > \PH_{(p,q)}-1$, from (\ref{GSV coprimo}) we have that $GSV(\mathcal{F},\mathcal{S}_{1})=\frac{pq}{4}$. Similarly, it turns out that $GSV(\mathcal{F},\mathcal{S}_{2})=\frac{pq}{4}$.

After (\ref{indice sep}) and (\ref{GSV-varias ramas}) we have $GSV(\mathcal{F},\mathcal{S})=0$, which is equivalent to $\omega_{p,q,\triangle}$, so the foliaction $\mathcal{F}$ generalized curve type when $d=2$.

In general \cite{Brunella1}, when $\mathcal{S}:f=f_{1}\cdots f_{d}$, we have to
\begin{equation}\label{GSV d ramas}
GSV(\mathcal{F},\mathcal{S})=\displaystyle\sum_{i=1}^{d}GSV(\mathcal{F},\mathcal{S}_{i})-2 \displaystyle\sum^{N}_{\substack {i\neq j\\
i=1}} (f_{i},f_{j})_{0},
\end{equation}
where $N=
\left(
\begin{array}{cc}
d\\
2\\
\end{array}
\right)$,\;
$GSV(\mathcal{F},\mathcal{S}_{i})=\dfrac{(d-1)pq}{d^{2}}$, and $(f_{i},f_{j})_{0}=\dfrac{pq}{d^{2}}$. Therefore, from (\ref{GSV d ramas}) we get
\begin{equation}\label{GSV dicritica}
GSV(\mathcal{F},\mathcal{S})=0.
\end{equation}
Hence the following proposition holds.

\begin{propo}\label{GSVd 2} Let $\mathcal{F}_{p,q,\Delta}$ be a non dicritical foliation and suppose that $(\Delta,y^{p}-x^{q})_{0} > \PH_{(p,q)}-1$. Then $\mathcal{F}_{p,q,\Delta}$ is of the generalized curve type.
\end{propo}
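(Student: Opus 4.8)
The plan is to verify the Gómez-Mont–Seade–Verjovsky criterion of Theorem \ref{GSV3}: it suffices to show that $GSV(\mathcal{F}_{p,q,\Delta},\mathcal{S}(\mathcal{F}_{p,q,\Delta}))=0$. By Lemma \ref{eje separatriz4} the union of separatrices is $y^{p}-x^{q}=0$, which for $d=\gcd(p,q)$ factors as $\prod_{i=1}^{d}f_{i}$ with $f_{i}=y^{p/d}-\zeta^{i}x^{q/d}$ ($\zeta$ a primitive $d$-th root of unity), each branch $\mathcal{S}_{i}$ parametrized by $\gamma_{i}(t)=(t^{p/d},A_{i}t^{q/d})$ with $A_{i}^{p/d}=\zeta^{i}$. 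When $p$ and $q$ are coprime, $d=1$ and the statement is precisely Proposition \ref{curva generalizada 2}; so I would concentrate on $d\geq 2$, for which the cases $d=1,2$ already worked out in the discussion preceding the statement serve as a template.

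First I would record the pairwise intersection numbers: for $i\neq j$,
\[
(f_{i},f_{j})_{0}=\ord_{t}f_{i}(\gamma_{j}(t))=\ord_{t}\bigl((\zeta^{j}-\zeta^{i})t^{pq/d^{2}}\bigr)=\frac{pq}{d^{2}},
\]
generalizing \eqref{indice sep}. Next I would compute each branch contribution via the order formula $GSV(\mathcal{F}_{p,q,\Delta},\mathcal{S}_{i})=\ord_{t}\bigl(\tfrac{B}{(f_{i})_{y}}(\gamma_{i}(t))\bigr)$, where $B=py^{p-1}+px\Delta$ is the $\dd y$-coefficient of $\omega_{p,q,\Delta}$. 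Along $\gamma_{i}$ the term $py^{p-1}$ has order $q(p-1)/d$, while $px\Delta$ has order $p/d+\ord_{(p,q)}(\Delta)$; the hypothesis $(\Delta,y^{p}-x^{q})_{0}=d\cdot\ord_{(p,q)}(\Delta)>\PH_{(p,q)}-1=pq-p-q$ forces $\ord_{(p,q)}(\Delta)\geq(pq-p-q+1)/d$, hence $p/d+\ord_{(p,q)}(\Delta)\geq q(p-1)/d+1/d>q(p-1)/d$. Therefore the leading term of $B\circ\gamma_{i}$ is uncanceled and comes from $py^{p-1}$; dividing by $(f_{i})_{y}$, whose order along $\gamma_{i}$ is $q(p-d)/d^{2}$, gives $GSV(\mathcal{F}_{p,q,\Delta},\mathcal{S}_{i})=\frac{q(p-1)}{d}-\frac{q(p-d)}{d^{2}}=\frac{(d-1)pq}{d^{2}}$.

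Finally I would substitute into Brunella's branch decomposition of the GSV index, formula \eqref{GSV d ramas}, with $N=\binom{d}{2}=\tfrac{d(d-1)}{2}$:
\[
GSV(\mathcal{F}_{p,q,\Delta},\mathcal{S})=d\cdot\frac{(d-1)pq}{d^{2}}-2\cdot\frac{d(d-1)}{2}\cdot\frac{pq}{d^{2}}=\frac{(d-1)pq}{d}-\frac{(d-1)pq}{d}=0,
\]
which is \eqref{GSV dicritica}. By Theorem \ref{GSV3} this forces $\mathcal{F}_{p,q,\Delta}$ to be of the generalized curve type.

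The main obstacle is the branch-by-branch $GSV$ computation: one must guarantee that the contribution of $\Delta$ does not lower, much less cancel, the leading coefficient of $B$ along each $\gamma_{i}$ — this is exactly where the \emph{strict} inequality in the hypothesis is used, and it has to be stated carefully since $\ord_{(p,q)}(\Delta)$ is an integer multiple of $1/d$ and the estimate must survive division by $(f_{i})_{y}$. A secondary point is the legitimacy of \eqref{GSV d ramas} itself, that is, the additivity-up-to-intersection-corrections of the $GSV$ index over branches; this is the residue bookkeeping from \cite{Brunella1} carried out explicitly for $d=2$ in the paragraphs before the statement, and I would simply invoke it in general rather than reprove it.
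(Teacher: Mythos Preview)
Your proposal is correct and follows essentially the same approach as the paper: both compute $GSV(\mathcal{F}_{p,q,\Delta},\mathcal{S})$ via Brunella's branch formula \eqref{GSV d ramas}, using $(f_i,f_j)_0=pq/d^{2}$ and $GSV(\mathcal{F}_{p,q,\Delta},\mathcal{S}_i)=(d-1)pq/d^{2}$, and then invoke Theorem~\ref{GSV3}. Your write-up is in fact more explicit than the paper's on one point---you spell out, via the order comparison $p/d+\ord_{(p,q)}(\Delta)>q(p-1)/d$, exactly where the strict inequality in the hypothesis enters to prevent cancellation in $B\circ\gamma_i$---whereas the paper only details this for $d\le 2$ and then simply asserts the values of $GSV(\mathcal{F},\mathcal{S}_i)$ and $(f_i,f_j)_0$ in general.
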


\medskip

{\bf Proof of Theorem \ref{resultado3}.}
It is an  immediate consequence of Propositions \ref{curva generalizada 2} and \ref{GSVd 2}.
\fdp
\\

\end{document}